\newtheorem{thm}{Theorem}[section]
\newtheorem{cor}[thm]{Corollary}
\newtheorem{lem}[thm]{Lemma}
\newtheorem{prop}[thm]{Proposition}
\newtheorem*{thm*}{Theorem}
\newtheorem*{prop*}{Proposition}
\newtheorem*{cor*}{Corollary}
\theoremstyle{definition}
\newtheorem{defn}[thm]{Definition}
\newtheorem{rem}[thm]{Remark}
\newtheorem*{conj*}{Conjecture}
\newtheorem{exam}[thm]{Example}
\newtheorem*{claim*}{Claim}
\newtheorem*{ques*}{Question}
\newtheorem{chunk}[thm]{\hspace*{-1.065ex}\bf}
\theoremstyle{remark}
\numberwithin{equation}{thm}
\def\amp{\operatorname{\mathsf{amp}}}
\def\D{\mathcal{D}}
\def\depth{\operatorname{depth}}
\def\Ext{\operatorname{Ext}}
\def\ge{\geqslant}
\def\G{\operatorname{G}}
\def\Gpd{\operatorname{\mathsf{Gpd}}}
\def\Gd{\operatorname{\mathsf{G-dim}}}
\def\GCpd{\operatorname{\mathsf{G_C-pd}}}
\def\GCd{\operatorname{\mathsf{G_C-dim}}}
\def\GC{\operatorname{\mathsf{G_C}}}
\def\gr{\operatorname{\mathsf{grade}}}
\def\H{\operatorname{H}}
\def\Hom{\operatorname{Hom}}
\def\id{\mathrm{id}}
\def\Cid{\mathrm{C\text{-}id}}
\def\Ker{\operatorname{Ker}}
\def\pd{\operatorname{pd}}
\def\r{\operatorname{r}}
\def\rank{\operatorname{rank}}
\def\Spec{\operatorname{Spec}}
\def\Supp{\operatorname{Supp}}
\def\Tor{\operatorname{Tor}}
\newcommand{\uhom}{{\mathbf R}\Hom}
\newcommand{\utp}{\otimes^{\mathbf L}}
\newcommand{\fm}{\mathfrak{m}}
\newcommand{\fp}{\mathfrak{p}}
\newcommand{\vdim}{\mbox{vdim}\,}
\begin{document}

\title{Numerical aspects of complexes of finite homological dimensions}
\author[M. Rahro Zargar and Mohsen Gheibi]{Majid Rahro Zargar and Mohsen Gheibi}
\address{Majid Rahro Zargar, Department of Engineering Sciences, Faculty of Advanced Technologies, University of Mohaghegh Ardabili, Namin, Ardabil, Iran,}
\email{zargar9077@gmail.com}
\email{m.zargar@uma.ac.ir}
\address[M. Gheibi]{Department of Mathematics, Florida A{\&}M University, Tallahassee FL, USA }
\email{mohsen.gheibi@famu.edu}
\subjclass[2020]{13D02, 13D05, 13D09}
\keywords{Cohen-Macaulay complex, Derived category, Homological dimension, Semidualizing complex. }

\begin{abstract}
Let $(R,\fm)$ be a local ring, and let $C$ be a semidualizing complex. We establish the equality $r_R(Z) = \nu(\Ext^{g-\inf C}_R(Z,C))\mu^{\depth C}_R(\mathfrak{m}, C)$ for a homologically finite and bounded complex $Z$ with finite $\GC$-dimension $g$. Additionally, we prove that if $\Ext^i(M,N)=0$ for sufficiently large $i$, while $\id_R\Ext^i(M,N)$ remains finite for all $i$, then both $\pd_R M$ and $\id_R N$ are finite when $M$ and $N$ are finitely generated $R$-modules. These findings extend the recent results of Ghosh and Puthenpurakal \cite{Ghosh}, addressing their questions as presented in \cite[Question 3.9]{Ghosh} and \cite[Question 4.2]{Ghosh}.
\end{abstract}
\maketitle


\section{Introduction}

Let $R$ be a commutative Noetherian local ring. Recently in \cite[Theorem 3.5]{Ghosh}, the authors proved the following theorem.

\begin{thm*}[Ghosh and Puthenpurakal]
    If $M$ is a finitely generated $\G$-perfect $R$-module with $\Gd_R M=g$, then
    $$r_R(M)=\nu(\Ext^{g}_R(M,R))r_R(R).$$
\end{thm*}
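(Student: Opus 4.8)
The plan is to reduce the Cohen--Macaulay type, an invariant built from $\Ext^\bullet_R(k,-)$ with $k=R/\fm$, to a single Bass number of $R$ by exploiting the derived reflexivity of $M$. Write $d=\depth R$ and $g=\Gd_R M$. Since $M$ is $\G$-perfect, $\grade_R M=g$, so $\Ext^i_R(M,R)=0$ for $i\neq g$ and hence $\uhom_R(M,R)\simeq\Ext^g_R(M,R)[-g]$ in the derived category; set $M^\dagger:=\Ext^g_R(M,R)$. Finite $\G$-dimension moreover yields the biduality isomorphism $M\simeq\uhom_R(\uhom_R(M,R),R)\simeq\uhom_R(M^\dagger,R)[g]$. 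Combining this with the Auslander--Bridger equality $\depth M=d-g$, I would first record that
$$r_R(M)=\dim_k\Ext^{d-g}_R(k,M)=\dim_k\H^{d}\bigl(\uhom_R(k,\uhom_R(M^\dagger,R))\bigr),$$
where the last step moves the biduality isomorphism into the argument of $\uhom_R(k,-)$ and cancels the shift $[g]$ against the degree $d-g$.

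The next step is Hom--tensor adjunction in the derived category, which gives
$$\uhom_R\bigl(k,\uhom_R(M^\dagger,R)\bigr)\simeq\uhom_R\bigl(k\utp_R M^\dagger,\,R\bigr).$$
Since $k\utp_R M^\dagger$ is a bounded complex with $\H_q\cong\Tor^R_q(k,M^\dagger)\cong k^{\beta_q}$, where $\beta_q=\dim_k\Tor^R_q(k,M^\dagger)$, I would feed it into the hyper-$\Ext$ spectral sequence
$$E_2^{p,q}=\Ext^p_R\bigl(\Tor^R_q(k,M^\dagger),R\bigr)\ \Longrightarrow\ \Ext^{p+q}_R\bigl(k\utp_R M^\dagger,R\bigr).$$
Here $E_2^{p,q}=\Ext^p_R(k,R)^{\beta_q}$ vanishes for $p<d$, because $\Ext^p_R(k,R)=0$ below the depth. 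Consequently, in total degree $d$ the only surviving entry is the corner $(p,q)=(d,0)$: every incoming or outgoing differential meets a column $q<0$ or a row $p<d$, so $E_2^{d,0}=E_\infty^{d,0}$ and it is the sole contribution. This identifies $\H^d\cong\Ext^d_R(k,R)^{\beta_0}$, whence $\dim_k\H^d=\beta_0\cdot r_R(R)$. Finally $\beta_0=\dim_k(M^\dagger/\fm M^\dagger)=\nu(\Ext^g_R(M,R))$, and assembling the chain of equalities yields $r_R(M)=\nu(\Ext^g_R(M,R))\,r_R(R)$.

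I expect the main obstacle to be bookkeeping rather than concept: pinning down the shift and degree conventions so that the biduality isomorphism lands $r_R(M)$ precisely in cohomological degree $d$, and justifying the spectral-sequence collapse cleanly. The collapse is robust, since it uses only $\Ext^p_R(k,R)=0$ for $p<\depth R$, but one must verify there are no hidden differentials or extension problems obscuring the dimension count; the degree restrictions force $E_2^{d,0}=E_\infty^{d,0}$ with no filtration ambiguity, so $\dim_k$ is genuinely multiplicative at this spot. A secondary point worth isolating as a preliminary lemma is the passage $\uhom_R(M,R)\simeq M^\dagger[-g]$ together with biduality, both of which rest on $M$ having finite $\G$-dimension and $\grade_R M=\Gd_R M$. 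Once these are in hand the argument is purely formal and, tellingly, adapts verbatim with $R$ replaced by a semidualizing complex $C$, which is presumably how the general statement of the abstract is obtained.
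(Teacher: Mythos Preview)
Your argument is correct and follows the same skeleton as the paper's proof of Theorem~\ref{Th3}: biduality $M\simeq\uhom_R(M^\dagger,R)[g]$, then adjunction $\uhom_R(k,\uhom_R(M^\dagger,R))\simeq\uhom_R(k\utp_R M^\dagger,R)$, then a collapse argument in degree $d=\depth R$. The packaging differs in two places worth noting.

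First, where you invoke a hyper-$\Ext$ spectral sequence, the paper instead inserts an extra $\utp_k k$ and uses the change-of-rings adjunction $\uhom_R(k\utp_R X^\dagger,C)\simeq\uhom_k(k\utp_R X^\dagger,\uhom_R(k,C))$; since both arguments of $\uhom_k$ are complexes of $k$-vector spaces this gives the identity $r_R(Z)=\sum_{i+j=d}\beta^R_i(X^\dagger)\,\mu^j_R(\fm,C)$ directly, with no spectral sequence and no convergence or extension issues to check. Your formality observation (that $k\utp_R M^\dagger$ splits in $\mathrm{D}(R)$) is really the same mechanism.

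Second, and more substantively, you use $\G$-perfectness to force $M^\dagger=\Ext^g_R(M,R)$ to be a module, so that $\beta_q=0$ for $q<0$ and the corner $(d,0)$ is isolated. The paper never uses $\G$-perfectness: it shifts to $X=\Sigma^{-g}Z$ so that $\GCd_R X=0$, and then the collapse comes from the two lower bounds $\beta^R_i(X^\dagger)=0$ for $i<\inf X^\dagger=\inf C$ and $\mu^j_R(\fm,C)=0$ for $j<\depth_R C$, together with $\inf C+\depth_R C=\depth R$. This is exactly what lets the paper drop the $\G$-perfect hypothesis and answer \cite[Question~3.9]{Ghosh}; your final remark that the argument ``adapts verbatim'' to a semidualizing $C$ is right, but the same bookkeeping also shows it adapts to arbitrary $Z$ of finite $\GC$-dimension, which is the real payoff.
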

\noindent Here $r_R(-)$ and $\nu(-)$ represent the type and number of minimal generators,  respectively. In light of the aforementioned theorem, in this paper we present the following result which not only generalizes Ghosh and Puthenpurakal's result but also relaxes the $\G$-perfect assumption and addresses their question \cite[Question 3.9]{Ghosh}.

\begin{thm*}[A]
Let $(R,\fm)$ be a local ring and $C$ be a semidualizing $R$-complex. Let $Z\in\mathrm{D}_{\Box}^f(R)$ be an $R$-complex of finite $\GC$-dimension $g$. Then one has an  equality $$r_R(Z)=\nu(\Ext_R^{g-\inf C}(Z, C))\mu_{R}^{\depth_R(C)}(\fm,C).$$
\end{thm*}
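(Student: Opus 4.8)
The plan is to study $Z$ through its \emph{dagger dual} $Z^{\dagger}:=\uhom_R(Z,C)$, using that finiteness of $\GCd$ is precisely $C$-reflexivity. Two facts from the theory of $\mathsf{G}_C$-dimension drive the argument. First, since $\GCd_R Z=g<\infty$, the complex $Z$ is $C$-reflexive: $Z^{\dagger}\in\mathrm{D}_{\Box}^f(R)$ and the biduality morphism $Z\xrightarrow{\ \simeq\ }\uhom_R(Z^{\dagger},C)$ is an isomorphism in the derived category. Second, the normalization of $\GCd$ yields $\inf Z^{\dagger}=\inf C-g$; in particular the module $\Ext^{g-\inf C}_R(Z,C)=\H_{\inf C-g}(Z^{\dagger})$ is exactly the bottom, and nonzero, homology of $Z^{\dagger}$. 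I regard establishing this second fact in full derived-category generality as the main obstacle; for finitely generated modules it reduces to the classical statement $\GCd_R M=\sup\{i:\Ext^i_R(M,C)\neq 0\}$.

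Granting these, I would compute $\uhom_R(k,Z)$ by combining biduality with $\uhom$--$\utp$ adjunction:
\[
\uhom_R(k,Z)\simeq\uhom_R\!\big(k,\uhom_R(Z^{\dagger},C)\big)\simeq\uhom_R\!\big(k\utp_R Z^{\dagger},C\big).
\]
Because $k\utp_R Z^{\dagger}$ has homology concentrated in $k$-vector spaces it is formal, so $k\utp_R Z^{\dagger}\simeq\bigoplus_j\Sigma^{j}k^{\beta_j}$ with $\beta_j:=\dim_k\Tor^R_j(k,Z^{\dagger})$. Applying $\uhom_R(-,C)$ converts this coproduct into a product, giving $\uhom_R(k,Z)\simeq\prod_j\Sigma^{-j}\uhom_R(k,C)^{\beta_j}$. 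Passing to homology in each fixed degree, where only finitely many $j$ contribute since $Z^{\dagger}$ and $C$ are homologically finite, yields the Bass-number identity
\[
\mu^{i}_R(\fm,Z)=\sum_{j}\beta_j\,\mu^{\,i-j}_R(\fm,C).
\]

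Finally I would extract the extremal term. Set $j_0:=\inf Z^{\dagger}=\inf C-g$ and $t:=\depth_R C$. Since $\beta_j=0$ for $j<j_0$ while $\beta_{j_0}\neq 0$, and $\mu^{\ell}_R(\fm,C)=0$ for $\ell<t$ while $\mu^{t}_R(\fm,C)\neq 0$, all summands vanish for $i<t+j_0$, and at $i=t+j_0$ the sole surviving summand is $j=j_0$. Hence $\depth_R Z=t+j_0$ and, taking the first nonzero Bass number,
\[
r_R(Z)=\mu^{\depth_R Z}_R(\fm,Z)=\beta_{j_0}\,\mu^{t}_R(\fm,C)=\beta_{j_0}\,\mu^{\depth_R C}_R(\fm,C).
\]
It remains to identify $\beta_{j_0}$: for a homologically bounded-below complex the bottom Betti number equals the minimal number of generators of its bottom homology, read off a minimal semifree resolution, so $\beta_{j_0}=\nu(\H_{j_0}(Z^{\dagger}))=\nu(\Ext^{g-\inf C}_R(Z,C))$. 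Substituting gives the asserted equality. Apart from the second fact above, the only points needing care are the formality of $k\utp_R Z^{\dagger}$ and the degreewise finiteness that legitimizes recovering the finite Bass-number sum from the product, both routine given homological finiteness.
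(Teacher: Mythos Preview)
Your proposal is correct and follows essentially the same route as the paper: biduality $Z\simeq (Z^{\dagger})^{\dagger}$, adjunction $\uhom_R(k,Z)\simeq\uhom_R(k\utp_R Z^{\dagger},C)$, formality of $k\utp_R Z^{\dagger}$, and extraction of the extremal term in the resulting Bass/Betti sum. The paper first replaces $Z$ by $X:=\Sigma^{-g}Z$ so that $\GCd_R X=0$ and $\inf X^{\dagger}=\inf C$, but this shift is purely cosmetic. One remark: the ``second fact'' you flag as the main obstacle, namely $\inf Z^{\dagger}=\inf C-g$, is in this paper literally the \emph{definition} of $\GCd_R Z$ (see Definition~\ref{2.2}), so no extra work is needed there.
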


\noindent As an application of Theorem (A), we provide some criteria for a semidualizing complex $C$ to be dualizing in terms of the existence of certain complexes of finite $\GC$-dimension; see Corollary \ref{C31}.

Next, in \cite[Theorem 4.3]{Ghosh}, the authors proved the following.

\begin{thm*}[Ghosh and Puthenpurakal]
    Let $N$ be a nonzero $R$-module of finite injective dimension with $\depth_RN\ge \dim R -1$. If $\id_R\Ext^i_R(M,N)<\infty$ for all $i$, then $\pd_RM<\infty$.
\end{thm*}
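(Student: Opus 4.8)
The plan is to study the iterated complex $\uhom_R\big(k,\uhom_R(M,N)\big)$ (with $k$ the residue field, $M,N$ finitely generated and $N\neq 0$) and to derive a numerical contradiction should $\pd_R M$ be infinite. Since $\id_R N<\infty$, Bass's theorem gives $\id_R N=\depth R=:s$; in particular the top Bass number $\mu^s_R(\fm,N)$ is nonzero and $\Ext^q_R(M,N)=0$ for every $q>s$. The engine of the argument is the derived adjunction isomorphism
$$\uhom_R\big(k,\uhom_R(M,N)\big)\;\simeq\;\uhom_R\big(k\utp_R M,\,N\big),$$
which I would evaluate in two different ways.

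On the left I would bound the cohomology. The standard spectral sequence
$$E_2^{p,q}=\Ext^p_R\big(k,\Ext^q_R(M,N)\big)\ \Longrightarrow\ \H^{p+q}\big(\uhom_R(k,\uhom_R(M,N))\big)$$
has finitely generated modules $\Ext^q_R(M,N)$ on its $E_2$-page, each of finite injective dimension by hypothesis; hence $\id_R\Ext^q_R(M,N)\le s$ and $E_2^{p,q}=0$ for $p>s$, while $E_2^{p,q}=0$ for $q>s$ as noted above. The $E_2$-page is thus confined to the square $[0,s]\times[0,s]$, so the abutment vanishes in all degrees outside $[0,2s]$: the left-hand complex has bounded cohomology.

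On the right I would compute cohomology exactly. Using the minimal free resolution $F_\bullet\to M$, the differentials of $k\otimes_R F_\bullet$ vanish, so $k\utp_R M$ is formal, $k\utp_R M\simeq\bigoplus_{q\ge 0}k^{\beta_q}[q]$ with $\beta_q=\dim_k\Tor^R_q(k,M)$. Applying $\uhom_R(-,N)$ and reading off $\H^j\big(\uhom_R(k,N)\big)=k^{\mu^j_R(\fm,N)}$ yields
$$\H^n\big(\uhom_R(k\utp_R M,N)\big)\cong k^{\,c_n},\qquad c_n=\sum_{q\ge 0}\beta_q\,\mu^{\,n-q}_R(\fm,N).$$
If $\pd_R M=\infty$ then $\beta_q\neq 0$ for all $q\ge 0$; taking the top Bass number $\mu^s_R(\fm,N)\neq 0$ forces $c_{q+s}\neq 0$ for every $q$, so the right-hand complex has nonzero cohomology in all degrees $\ge s$. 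This unboundedness contradicts the conclusion of the previous paragraph, and therefore $\pd_R M<\infty$.

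The step I expect to require the most care is the passage to unbounded cohomology on the right: a priori spectral-sequence differentials could conspire to collapse it, so I would insist on the formality of $k\utp_R M$ coming from minimality of $F_\bullet$, which computes $\H^n\big(\uhom_R(k\utp_R M,N)\big)$ on the nose and removes any such cancellation. It is worth emphasizing that the hypothesis $\depth_R N\ge\dim R-1$ is never invoked: the argument uses only $\id_R N<\infty$ (to bound the index $q$) together with the finiteness of each $\id_R\Ext^i_R(M,N)$ (to bound the index $p$), which is exactly the slack that the present paper removes in sharpening the statement.
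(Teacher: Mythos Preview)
Your argument is correct. The spectral sequence bound on the left is legitimate because $\uhom_R(M,N)$ is a bounded complex (its cohomology sits in degrees $0,\dots,s$), so the hypercohomology spectral sequence converges; and the formality of $k\utp_R M$ via the minimal free resolution makes the right-hand computation exact, not merely an $E_2$-estimate. Your closing remark is also on point: the depth hypothesis on $N$ plays no role.

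The paper does not prove this particular statement of Ghosh and Puthenpurakal directly; instead it establishes the stronger Proposition~\ref{MM}, which drops the assumption $\id_R N<\infty$ in favour of the weaker vanishing $\Ext^i(X,Y)=0$ for $i\gg 0$ and then deduces \emph{both} $\pd_R X<\infty$ and $\id_R Y<\infty$. The route is different from yours: rather than a spectral sequence, the paper first proves Lemma~\ref{GG} by induction on the number of nonvanishing homologies, showing that a bounded complex whose homologies all have finite injective dimension itself has finite injective dimension; applying this to $\uhom_R(M,N)$ gives $\id_R\uhom_R(M,N)<\infty$, after which a reference to \cite[Lemma~6.2.10]{CHFox} finishes. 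Your approach is more hands-on and self-contained, yielding the explicit identity $\dim_k\Ext^n_R(k,\uhom_R(M,N))=\sum_q\beta_q^R(M)\mu^{n-q}_R(\fm,N)$ without invoking an external black box; the paper's approach is more modular and, because it isolates the statement $\id_R\uhom_R(X,Y)<\infty$, immediately yields the extra conclusion $\id_R Y<\infty$ and applies to complexes rather than modules.
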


\noindent In addition, the authors raised the question of whether the aforementioned theorem holds true without the additional assumption $\depth_RN\ge \dim R -1$; see \cite[Question 4.2]{Ghosh}. In Section 4, we provide an affirmative answer to this question and present the following more comprehensive result.

\begin{prop*}[B]
Let $X, Y$ be complexes such that $X\in \mathrm{D}_{\sqsubset}^f(R)$ and $Y\in \mathrm{D}_{\sqsupset}^f(R)$. Assume $\Ext^i(X,Y)=0$ for all $i\gg 0$ and $\id_R\Ext^i(X,Y)<\infty$ for all $i$. Then, both $\pd_R X$ and $\id_R Y$ are finite.
\end{prop*}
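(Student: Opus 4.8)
The plan is to show that the two hypotheses \emph{together} force $\rhom_R(X,Y)$ to have finite injective dimension, and then to convert that single statement, via a Hom--tensor adjunction against the residue field, into the two desired finiteness results. Throughout I assume $X\not\simeq 0\not\simeq Y$, as is implicit in the statement. Write $k=R/\fm$, set $b_\ell=\dim_k\H_\ell(k\utp_R X)$ for the Betti numbers of $X$, and recall $\pd_R X<\infty$ iff $b_\ell=0$ for $\ell\gg0$, while $\id_R Y<\infty$ iff $\H_\ell(\rhom_R(k,Y))=0$ for $\ell\ll0$ (i.e. the Bass numbers of $Y$ vanish in all large cohomological degrees).

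First I would record that $W:=\rhom_R(X,Y)$ lies in $\mathrm{D}^f_{\Box}(R)$. Its homology is finitely generated because $X\in\mathrm{D}^f_{\sqsubset}(R)$ and $Y\in\mathrm{D}^f_{\sqsupset}(R)$; it is bounded above by the standard inequality $\sup W\le \sup Y-\inf X<\infty$; and it is bounded below \emph{precisely} because $\Ext^i(X,Y)=\H_{-i}(W)$ vanishes for $i\gg0$. Since every homology module $\H_\ell(W)=\Ext^{-\ell}(X,Y)$ has finite injective dimension by hypothesis, and there are only finitely many of them, an induction on the number of nonzero homology modules — using the hard-truncation triangles together with the fact that, in any distinguished triangle, finiteness of injective dimension for two of the three terms implies it for the third — yields $\id_R W<\infty$. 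This is the step where both hypotheses are genuinely consumed: the eventual vanishing of $\Ext$ is exactly what makes $W$ bounded, and boundedness is indispensable for passing from finite injective dimension of the homology to finite injective dimension of $W$.

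Next I would exploit $\id_R W<\infty$ through the swap adjunction
\[
\rhom_R(k,W)\;\simeq\;\rhom_R(k\utp_R X,\,Y).
\]
Because $\id_R W<\infty$, the left-hand side is bounded. On the right, $k\utp_R X$ is a complex of $k$-vector spaces, hence splits as the coproduct of shifts of its homology, $k\utp_R X\simeq\bigoplus_{\ell}\Sigma^{\ell}k^{b_\ell}$ in $\mathrm{D}(R)$ (the sum running over $\ell\ge\inf X$). Applying $\rhom_R(-,Y)$ and using that derived Hom sends coproducts to products gives
\[
\rhom_R(k\utp_R X,\,Y)\;\simeq\;\prod_{\ell}\Sigma^{-\ell}\,\rhom_R(k,Y)^{\,b_\ell}.
\]
Thus this product is bounded. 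Since homology commutes with products of complexes and the factors are $k$-vector spaces, so no cancellation can occur, one reads off
\[
\H_n\Big(\prod_{\ell}\Sigma^{-\ell}\rhom_R(k,Y)^{b_\ell}\Big)\;=\;\prod_{\ell}\H_{n+\ell}\big(\rhom_R(k,Y)\big)^{b_\ell}.
\]

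Finally I would extract the two conclusions from the boundedness of this product. For $\id_R Y$: the lowest Betti number $b_s\neq0$ with $s=\inf X$ survives as a direct factor, so boundedness below of the product forces $\H_{n+s}(\rhom_R(k,Y))=0$ for $n\ll0$, i.e. the Bass numbers of $Y$ vanish in all sufficiently large cohomological degrees, whence $\id_R Y<\infty$. For $\pd_R X$: the complex $\rhom_R(k,Y)$ is bounded and nonzero (nonzero by Nakayama, as $Y\not\simeq0$), so fix a degree $m_0$ in which it has nonvanishing homology; were $b_\ell\neq0$ for arbitrarily large $\ell$, that fixed homology would reappear as a nonzero direct factor of $\H_n$ for the sequence $n=m_0-\ell\to-\infty$, contradicting boundedness below of the product, so $b_\ell=0$ for $\ell\gg0$ and $\pd_R X<\infty$. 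The main obstacle is the bookkeeping in this last step: one must manage the doubly-indexed product, justify that homology commutes with the possibly infinite product and that the $k$-vector space factors cannot cancel, and keep the shift indices aligned so that the single boundedness statement separates cleanly into the Betti-number and Bass-number vanishing.
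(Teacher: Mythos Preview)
Your proof is correct and follows essentially the same route as the paper: first establish that $W=\rhom_R(X,Y)$ is bounded with finite injective dimension via an induction on the number of nonzero homologies (this is exactly the paper's Lemma~\ref{GG}), then deduce $\pd_RX<\infty$ and $\id_RY<\infty$ from $\id_RW<\infty$. The only difference is that the paper's proof of the proposition simply cites a lemma from Christensen--Foxby for the second step, whereas you unpack that step explicitly via the adjunction $\rhom_R(k,W)\simeq\rhom_R(k\utp_RX,Y)$ and the decomposition of $k\utp_RX$ into shifts of $k$, obtaining the convolution identity between the Bass numbers of $W$, the Betti numbers of $X$, and the Bass numbers of $Y$; this is precisely the content of the cited lemma, so the underlying argument is the same.
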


In a related topic, in \cite{GTA}, Ghosh and Takahashi examined the celebrated Auslander-Reiten Conjecture in terms of injective dimensions of $\Hom_R(M,R)$ and $\Hom_R(M,M)$ for a finitely generated $R$-module $M$. In their work, they established the following result.

\begin{thm*}[Ghosh and Takahashi]
Let $R$ be a commutative Noetherian ring and let $M$ be a nonzero $R$-module such that $\Ext^i_R(M,M\oplus R)=0$ for all $i>0$. If at least one of $\Hom_R(M,R)$ or $\Hom_R(M,M)$ has finite injective dimension, then $M$ is free and $R$ is Gorenstein.
\end{thm*}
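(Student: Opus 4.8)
The plan is first to reduce to the case where $R$ is a complete local ring. Both conclusions are local in nature, and the hypotheses pass to localizations and to the completion, since $\Ext$ commutes with localization for finitely generated modules and finite injective dimension localizes, while freeness and the Gorenstein property descend from $\widehat R$. Once $R$ is local, the existence of a nonzero finitely generated module of finite injective dimension---namely $\Hom_R(M,R)$ in the first case (which is nonzero, since otherwise $\rhom_R(M,R)=0$, forcing $M=0$) or $\Hom_R(M,M)$ in the second---forces $R$ to be Cohen--Macaulay by Bass's theorem, and completeness provides a canonical module $\omega$. I will use throughout that the hypothesis $\Ext^i_R(M,M\oplus R)=0$ for $i>0$ says precisely that $\rhom_R(M,R)\simeq\Hom_R(M,R)$ and $\rhom_R(M,M)\simeq\Hom_R(M,M)$ are concentrated in degree $0$.

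The common engine is the adjunction $\rhom_R\big(k,\rhom_R(M,X)\big)\simeq\rhom_R\big(M\utp_R k,\,X\big)$ for $X\in\{R,M\}$, where $k=R/\m$. Writing $M\utp_R k\simeq\bigoplus_i k^{\beta_i}[i]$ with $\beta_i$ the Betti numbers of $M$, the right-hand side becomes $\bigoplus_i \rhom_R(k,X)^{\beta_i}[-i]$, so that
\[
\dim_k\Ext^n_R(M\utp_R k,\,X)=\sum_{i}\beta_i\,\mu^{\,n-i}_X, \qquad \mu^j_X:=\dim_k\Ext^j_R(k,X).
\]
In the first case ($X=R$) the hypothesis $\id_R\Hom_R(M,R)<\infty$ makes $\rhom_R(k,\Hom_R(M,R))$ bounded; in the second case ($X=M$) the hypothesis $\id_R\Hom_R(M,M)<\infty$ does the same. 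The elementary fact that two sequences of nonnegative integers, each with infinitely many nonzero terms, have a convolution $\sum_i\beta_i\mu^{n-i}$ that is nonzero for infinitely many $n$ then yields a dichotomy: either $\pd_R M<\infty$, or the relevant Bass numbers vanish eventually. For $X=R$ the latter reads $\id_R R<\infty$, i.e.\ $R$ is Gorenstein; for $X=M$ it reads $\id_R M<\infty$.

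It remains to close each branch, for which the key lemma is that a perfect complex $L$ with $\rhom_R(L,L)$ concentrated in a single degree is, up to shift, a finite free module: passing to a minimal free complex, minimality forces its length to be $0$. In particular $\pd_R M<\infty$ together with $\Ext^{>0}_R(M,M)=0$ gives $M$ free; then $\Hom_R(M,R)$ and $\Hom_R(M,M)$ are free $R$-modules, so the standing finiteness hypothesis forces $\id_R R<\infty$ and $R$ is Gorenstein, which closes the $\pd_R M<\infty$ branch in both cases. In the first case the remaining Gorenstein branch is closed by observing that $\Ext^{>0}_R(M,R)=0$ makes $M$ maximal Cohen--Macaulay (equivalently totally reflexive, as $R$ is Gorenstein), hence so is $\Hom_R(M,R)$, which now has finite projective dimension because finite injective dimension equals finite projective dimension over a Gorenstein ring; thus $\Hom_R(M,R)$ is free, and as $M$ is the $R$-dual of $\Hom_R(M,R)$ it is free as well. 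In the second case the branch $\id_R M<\infty$ is handled by Foxby equivalence: $M\simeq\omega\utp_R L$ with $L=\rhom_R(\omega,M)$ of finite projective dimension, and the adjunction $\rhom_R(M,M)\simeq\rhom_R(L,L)$ shows $\rhom_R(L,L)$ is concentrated in degree $0$, forcing $L$---and hence, after untwisting, $M\cong\omega^{\,r}$---to be free over $\omega$; then $\Hom_R(M,M)\cong\mathrm{Mat}_r(R)$ has finite injective dimension, so $R$ is Gorenstein, $\omega\cong R$, and $M$ is free.

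I expect the main obstacle to be the second case with $\id_R M<\infty$: unlike the other branches it cannot be closed by the numerical dichotomy alone, and one must pass through Foxby equivalence to recognize $M$ as a direct sum of copies of the canonical module before the hypothesis on $\Hom_R(M,M)$ can be brought to bear. Some care is also needed in the derived self-$\rhom$ argument (ensuring that concentration in a single degree genuinely forces a free module, not merely a shifted complex) and in checking that the relevant $\Hom$-modules are nonzero, so that Bass's Cohen--Macaulay criterion applies.
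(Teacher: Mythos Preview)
Your argument is correct, but you work harder than necessary because you treat the convolution conclusion as a dichotomy when it is actually a conjunction. Since Betti and Bass numbers are nonnegative and both sequences are nonzero (as $M\neq 0$ and $X\in\{R,M\}$ is nonzero), the eventual vanishing of $\sum_i\beta_i\,\mu_X^{\,n-i}$ forces \emph{both} $\pd_RM<\infty$ and $\id_RX<\infty$ simultaneously: fixing any $i_0$ with $\beta_{i_0}\neq 0$ kills all $\mu_X^{\,j}$ for $j\gg 0$, and symmetrically fixing any $j_0$ with $\mu_X^{j_0}\neq 0$ kills all $\beta_i$ for $i\gg 0$. So you never genuinely land in the ``Gorenstein branch'' of the first case or the ``$\id_RM<\infty$ branch'' of the second as a separate situation needing its own closure, and the Foxby-equivalence detour through $M\cong\omega^{\,r}$, while correct, is unnecessary.

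The paper's proof exploits exactly this. It packages the observation as a general statement (Proposition~4.2, resting on Lemma~4.1---a bounded complex whose homologies all have finite injective dimension itself has finite injective dimension---together with a cited Christensen--Foxby result) that delivers $\pd_RM<\infty$ and $\id_RY<\infty$ at once for $Y\in\{R,M\}$. From there $R$ is Gorenstein immediately (in the second case because there is a nonzero module of both finite projective and finite injective dimension), and freeness of $M$ follows from $\pd_RM<\infty$ together with $\Ext_R^{>0}(M,R)=0$; the paper spells this last step out via the very Bass-number identity you derived, reading off $\beta_1^R(M)=0$ in degree $d+1$. Thus your engine and the paper's coincide, but recognizing the conjunction collapses your branch analysis and shortens the endgame considerably.
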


In Section 4, we provide a concise proof of this theorem, which can be found in Theorem \ref{AR}. Additionally, we extend Peskine-Szpiro's theorem to the realm of $C$-injective dimension.

\begin{thm*}[C]
    Let $(R,\fm)$ be a commutative Noetherian local ring, $C$ be a semidualizing $R$-complex, and $X\in\mathcal{D}_{\Box}^f(R)$. Assume that $\id_R(C\utp_RX)<\infty$. Then, for all integers $t$, there is an equality $$\beta^{R}_{t}(X)=\underset{\tiny{i+j=t}}\sum\mu_{R}^i(\fm,C)\mu_{R}^{-j}(\fm,C\utp_R X).$$
\end{thm*}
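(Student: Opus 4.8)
The plan is to show that, writing $Y:=C\utp_R X$ and $k:=R/\fm$, the complex computing the Betti numbers of $X$ is isomorphic in the derived category to $\rhom_R(\rhom_R(k,C),Y)$, and then to read off the asserted convolution by splitting $\rhom_R(k,C)$ over the field $k$. Since $\beta_t^R(X)=\dim_k\H_t(k\utp_R X)$, while $\mu_R^i(\fm,C)=\dim_k\Ext_R^i(k,C)$ and $\mu_R^{-j}(\fm,Y)=\dim_k\Ext_R^{-j}(k,Y)$, the whole statement is a comparison of dimensions of homology $k$-vector spaces, so it suffices to produce one derived-category isomorphism and then count.

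First I would establish that $X$ lies in the Auslander class $\mathcal{A}_C$, equivalently that the unit $\eta_X\colon X\to\rhom_R(C,C\utp_R X)=\rhom_R(C,Y)$ is an isomorphism. The hypothesis $\id_R Y<\infty$ places $Y$ in the Bass class $\mathcal{B}_C$, so by Foxby equivalence the counit $C\utp_R\rhom_R(C,Y)\to Y$ is an isomorphism. Applying $C\utp_R-$ to $\eta_X$ and invoking the triangle identity then shows that $C\utp_R\eta_X$ is an isomorphism, so $C\utp_R\Cone(\eta_X)\simeq 0$. Because $C$ is semidualizing its support is all of $\Spec R$, and $\Cone(\eta_X)$ is homologically finite (here $X\in\D_{\Box}^f(R)$, while $Y$ is homologically finite and $\id_R Y<\infty$ keeps $\rhom_R(C,Y)$ bounded with finitely generated homology); hence $\Cone(\eta_X)\simeq 0$ and $X\simeq\rhom_R(C,Y)$.

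The key step is the Hom-evaluation isomorphism
$$k\utp_R\rhom_R(C,Y)\xrightarrow{\ \simeq\ }\rhom_R\!\big(\rhom_R(k,C),Y\big),$$
which holds precisely because $k\in\D_{\Box}^f(R)$, the complexes $C$ and $Y$ are homologically bounded, and $\id_R Y<\infty$; this finite injective dimension is exactly what licenses the evaluation morphism to be invertible, and it is the main technical point on which everything hinges. Combined with Step 1 this yields $k\utp_R X\simeq\rhom_R(\rhom_R(k,C),Y)$.

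Finally I would compute homology. As $\fm$ annihilates $\Hom_R(k,J)$ for any injective module $J$, the complex $\rhom_R(k,C)$ is a complex of $k$-vector spaces and so splits as $\bigoplus_i\Sigma^{-i}k^{\,\mu_R^i(\fm,C)}$. Substituting and using that $\rhom_R(-,Y)$ turns this coproduct of shifted copies of $k$ into $\bigoplus_i\Sigma^{i}\rhom_R(k,Y)^{\mu_R^i(\fm,C)}$, taking $\H_t$ gives
$$\beta_t^R(X)=\dim_k\H_t\big(\rhom_R(\rhom_R(k,C),Y)\big)=\sum_i\mu_R^i(\fm,C)\,\dim_k\H_{t-i}(\rhom_R(k,Y)),$$
and since $\H_{t-i}(\rhom_R(k,Y))=\Ext_R^{\,i-t}(k,Y)$ has dimension $\mu_R^{\,i-t}(\fm,Y)=\mu_R^{-j}(\fm,Y)$ with $j=t-i$, this is exactly $\sum_{i+j=t}\mu_R^i(\fm,C)\mu_R^{-j}(\fm,Y)$; the sum is finite because $\id_R Y<\infty$ forces $\mu_R^{-j}(\fm,Y)=0$ for all but finitely many $j$. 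The main obstacle is the pair of structural inputs — membership $X\in\mathcal{A}_C$ and invertibility of the Hom-evaluation map — both of which I expect to follow cleanly from the finiteness of $\id_R(C\utp_R X)$ together with the Foxby-equivalence formalism for semidualizing complexes.
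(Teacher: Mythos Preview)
Your proposal is correct and follows essentially the same route as the paper: establish $X\in\mathcal{A}_C$ from $\id_R(C\utp_R X)<\infty$, apply the Hom-evaluation isomorphism $k\utp_R\rhom_R(C,Y)\simeq\rhom_R(\rhom_R(k,C),Y)$, and then exploit that $\rhom_R(k,C)$ is a complex of $k$-vector spaces to obtain the convolution of Bass numbers. The only cosmetic differences are that the paper cites \cite{CF1} directly for the Auslander-class membership (rather than arguing via the Bass class and a cone), and in the last step it passes through the adjunction $\rhom_R(\rhom_R(k,C)\utp_k k,\,Y)\simeq\rhom_k(\rhom_R(k,C),\rhom_R(k,Y))$ instead of splitting $\rhom_R(k,C)$ explicitly; both maneuvers encode the same K\"unneth-type count over $k$.
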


As a corollary of Theorem (C), one has a semidualizing module $C$ is dualizing, if there exists a cyclic $R$-module of finite $C$-injective dimension.

\section{Prerequisites}
Throughout, $R$ is a commutative Noetherian ring with identity.

\begin{chunk}[\bf{Complexes}]
 An $R$ complex $X$ is a sequence of $R$-modules $X_i$ and $R$-linear
maps $\partial_{i}^{X}: X_i\rightarrow X_{i-1}, i\in\mathbb{Z}$, where $X_i$ is an $R$-module in degree $i$, and
$\partial_{i}^{X}$ is the $i$-th \textit{differential} with $\partial_{i-1}^{X}\partial_{i}^{X}=0$.An $R$-module $M$ is thought of as a complex $0\rightarrow M\rightarrow0$, with $M$
in degree $0$. For any integer $n$, the $n$-fold shift of a complex $(X,\xi^X)$ is the complex $\Sigma^nX$ given by $(\Sigma^n X)_v=X_{v-n}$ and $\xi_{v}^{\Sigma^nX}=(-1)^n\xi_{v-n}^{X}$. The homological position and size of a complex are captured by the numbers
\textit{supremum}, \textit{infimum} and \textit{amplitude}
defined by $$\sup X:=\sup \{i\in \mathbb{Z}~|~\H_i(X)\neq 0\},$$ $$\inf X:=\inf \{i\in \mathbb{Z}~|~
\H_i(X) \neq 0\}, \text{and}$$
$$\amp X:=\sup X-\inf X.$$
 With the usual convention that $\sup \emptyset=-\infty$ and $\inf \emptyset=\infty$.
 \end{chunk}

\begin{chunk}[\bf{Derived Category}]
The \textit{derived category} $\mathrm{D}(R)$ is
the category of $R$-complexes localized at the class of all quasi-isomorphisms (see
\cite{RHart}). We use $\simeq$ to denote isomorphisms in $\mathrm{D}(R)$. The full subcategory of homologically left (resp. right) bounded complexes is denoted by $\mathrm{D}_{\sqsubset}(R)$ (resp.
$\mathrm{D}_{\sqsupset}(R)$). Also, we denote $\mathrm{D}_{\Box}^f(R)$ the full subcategory of homologically bounded complexes with finitely generated homology modules.

Let $\textbf{X},\textbf{Y}\in \mathrm{D}(R)$. The left-derived tensor product of $\textbf{X}$ and $\textbf{Y}$ in $\mathrm{D}(R)$ is denoted  $\textbf{X}\otimes_
R^{{\bf L}}\textbf{Y}$ and defined by $$\textbf{X}\otimes_R^{{\bf L}}\textbf{Y}\simeq \textbf{F}\otimes_R
\textbf{Y}\simeq \textbf{X} \otimes_R \textbf{F}^{'}\simeq \textbf{F}\otimes_R\textbf{F}^{'},$$ where $\textbf{F}$
and $\textbf{F}^{'}$ are semi-flat resolutions of $\textbf{X}$ and $\textbf{Y}$, respectively. The right-derived homomorphism complex
of $\textbf{X}$ and $\textbf{Y}$ in $\mathrm{D}(R)$ is denoted  ${\bf R}\Hom_R(\textbf{X},\textbf{Y})$ and
defined by $${\bf R}\Hom_R(\textbf{X},\textbf{Y}) \simeq \Hom_R(\textbf{P},\textbf{Y})\simeq \Hom_R(\textbf{X},
\textbf{I})\simeq \Hom_R(\textbf{P},\textbf{I}) ,$$ where $\textbf{P}$ is a semi projective resolution
of $\textbf{X}$ and $\textbf{I}$ is a semi-injective resolution of $\textbf{Y}$. For
$i\in\mathbb{Z}$, we set $\Tor^R_i(X,Y)=\H_i(X\utp_R Y)$ and $\Ext^i_{R}(X,Y)=\H_{-i}(\uhom_{R}(X,Y ))$. For $R$-complexes $X$, $Y$ and an integer $n\in \mathbb{Z}$ there is the following identity of complexes:
$$\uhom_R(X,\Sigma^n Y) = \Sigma^n\uhom_R(X,Y), \text{\ and}$$
$$\uhom_R(\Sigma^n X,Y) = \Sigma^{-n}\uhom_R(X,Y).$$
\end{chunk}
\begin{chunk}[\bf{Numerical Invariants}]
 Let $(R,\fm,k)$ be a local ring with residue field $k$. The \textit{depth} of an $R$-complex $X$ is defined by
$\depth_R(X) =-\sup\uhom_{R}(k,X)$,
and the Krull dimension of $X$ is defined as follows:
 \[\begin{array}{rl}
\dim_R(X)&=\sup\{\dim R/\fp-\inf X_{\fp}~|~ \fp\in\Spec R\}\\
&=\sup\{\dim R/\fp-\inf X_{\fp}~|~ \fp\in\Supp_R X\},
\end{array}\]
where $\Supp_R X = \{\fp\in\Spec R~|~X_\fp \not\simeq 0\}=\bigcup_{i\in\mathbb{Z}}\Supp_R \H_i(X)$. Note that for
modules these notions agree with the standard ones. For an $R$-complex $X$ in $\mathrm{D}_{\sqsubset}^f(R)$ such that $X\not\simeq 0$, one always has the inequality $\depth_R(X)\leq\dim_R(X)$. A homologically bounded and finitely generated $R$-complex $X$ is said to be \textit{Cohen-Macaulay} whenever the equality $\depth_R(X)=\dim_R(X)$ holds. For an $R$-complex $X$ and integer $m\in\mathbb{Z}$, the $m$-th \textit{Bass number} of $X$ is defined by $$\mu_R^m(\fm,X)=\rank_{k}\H_{-m}(\uhom_R(k,X))=\rank_{k}\Ext^m_{R}(k,X).$$ If $X$ is homologically bounded, then the \textit{type} of $X$ is denoted by $\r_R(X)$ and defined by $\r_R(X)=\mu_R^{\depth_R(X)}(\fm,X)$. The $m$-th \textit{Betti number} of $X$ is defined by $$\beta_m^{R}(X)=\rank_{k}\H_{m}(k\utp_{R}X)=\rank_{k}\Tor^R_{m}(k,X).$$
\end{chunk}

\begin{defn} An $R$-complex $C$ is said to be \textit{semidualizing} for $R$ if and only
if $C\in\mathrm{D}_{\Box}^f(R)$ and the homothety morphism $\chi_{C}^{R}:R\longrightarrow\uhom_{R}(C,C)$ is an isomorphism. Notice that in the case where $C$ is an $R$-module, this coincides with the definition of a semidualizing module. An $R$-complex $\D\in\mathrm{D}_{\Box}^f(R)$ is said to be a \textit{dualizing} complex if and only if it is a semidualizing complex with finite injective dimension.
\end{defn}
\begin{defn}\label{2.2} Let $C$ be a semidualizing complex for $R$. An $R$-complex $X$ in $\mathrm{D}(R)$ is said to be $C$-\textit{reflexive} if and only if $X$ and $\uhom_{R}(X,C)$ belong to $\mathrm{D}_{\Box}^f(R)$ and the biduality morphism $\delta_{X}^C:X\longrightarrow\uhom_R(\uhom_R(X,C),C)$ is invertible. The class of complexes in $\mathrm{D}_{\Box}^f(R)$ which are $C$-reflexive is denoted by $\mathcal{R}(C)$. In view of \cite[Definition 3.11]{CF1}, for an $R$-complex $X$ in $\mathrm{D}_{\Box}^f(R)$ we define the {\em $G$-dimension} of $X$ with respect to $C$ as follows:
\[ \GCd_{R}(X)=\begin{cases}
       \inf C-\inf\uhom_R(X,C)& \text{if $X\in\mathcal{R}(C)$}\\
       \infty& \text{if $X\notin\mathcal{R}(C)$}.
       \end{cases} \]

\end{defn}
\begin{defn}\label{2.3}Let $C$ be a semidualizing $R$-module. An $R$-module $M$ is said to be $C$-\textit{Gorenstein projective} if:
\begin{itemize}
\item[(i)]{{$\Ext^{i\ge 1}_{R}(M,C\otimes_R P)=0$ for all projective $R$-modules $P$.}}
\item[(ii)]{{There exist projective $R$-modules $P_0, P_1,\dots$ together with an exact sequence:
$$0\longrightarrow M \longrightarrow C\otimes_R P_0\longrightarrow C\otimes_R P_1\longrightarrow\dots,$$
and furthermore, this sequence stays exact when we apply the functor
$\Hom_R(-,C\otimes_{R}Q)$ for any projective $R$-module $Q$.}}
\end{itemize}For a homologically right-bounded $R$-complex $X$ we define:
$$\GCpd_R(X)=\inf_{Y}\{\sup\{n\in\mathbb{Z}| Y_n\neq 0\}\},$$
where the infimum is taken over all $C$-Gorenstein projective resolutions $Y$ of $X$. In view of \cite[Examples 2.8(b)(c)]{HOJ}, such resolutions always exist. Note that when $C=R$ this notion recovers the concept of
ordinary Gorenstein projective dimension.
\end{defn}

Next, we recall the concept of the Gorenstein dimension with respect to a semidualizing $R$-module $C$, which was originally introduced by Golod \cite{Golod}.

\begin{defn}\label{DEf}The {\em $G_{C}$-class} of $R$ denoted $G_{C}(R)$, is the collection of finite $R$-module $M$ such that
\begin{itemize}
\item[(i)]{{$\Ext^i_{R}(M,C)=0$ for all $i>0$.}}
\item[(ii)]{{$\Ext^i_{R}(\Hom_{R}(M,C),C)=0$ for all $i>0$.}}
\item[(iii)]{{The canonical map $M\longrightarrow\Hom_{R}(\Hom_{R}(M,C),C)$ is an isomorphism.}}
\end{itemize}
For a non-negative integer $n$, the $R$-module $M$ is said to be of
{\em $G_{C}$-dimension} at most $n$, if and only if there exists an exact
sequence
$$0\longrightarrow G_n\longrightarrow G_{n-1}\longrightarrow\cdots \longrightarrow G_0\longrightarrow M\longrightarrow0,$$
where {$G_{i}\in G_{C}(R)$} for $0\leq i\leq n$. If such a sequence does not exist, then we write {$\Gd_{C}(M)=\infty$}.
Note that when $C=R$ this notion recovers the concept of Gorenstein dimension which was introduced in \cite{Aus}.
\end{defn}
\begin{rem}Notice that if $C$ is a semidualizing $R$-module and $X\in\mathcal{\D}_{\Box}^f(R)$, then, by \cite[Proposition 3.1]{HOJ}, the Definitions \ref{2.2} and \ref{2.3} are coincide, that is, $\GCd_{R}(X)=\GCpd_R (X)$. By, \cite[Theorem 2.16]{HOJ}, for a homologically
right-bounded complex $Z$ of $R$-modules there is the equality $\GCpd_R (Z)=\Gpd_{R\ltimes C}(Z)$. Also, by \cite[Theorem 2.7]{SY}, for a finitely generated $R$-module $M$ and a semidualizing $R$-module $C$, we have that the two Definitions \ref{2.2} and \ref{DEf} are coincide, that is, $\Gd_{C}(M)=\GCd_{R}(M)$.
\end{rem}
\begin{defn}Let $C$ be a semidualizing complex for $R$. The {\it{Auslander class}} of $R$ with respect to $C$, $\mathcal{A}_C(R)$, is the full subcategories of $\mathrm{D}_{\Box}(R)$ defined by specifying their objects as follows: $X$ belongs to $\mathcal{A}_C(R)$ if and only if $C\utp_R X\in\mathrm{D}_{\Box}(R)$ and the canonical map $\gamma_X^C: X\rightarrow\uhom_R(C,C\utp_R X)$ is an isomorphism.
\end{defn}

\section{Complexes with finite Gorenstein dimension}

In this section, we give the proof of our main result Theorem (A). The starting point of this section is the following lemma, which is used in the proof of the next theorem. For the proof of this lemma, the reader is referred to \cite{CF1}.
\begin{lem}{\label{Lem}}Let $(R,\fm,k)$ be a local ring, $C$ be a semidualizing $R$-complex and $X$ be a homologically bounded and finitely generated $R$-complex. Then the following statements hold:
\begin{itemize}
\item[(1)]{{If $\GCd_R(X)<\infty$, then $\Gd_C(X)=\depth R- \depth_R(X).$}}
\item[(2)]{The following conditions are equivalent:}
\begin{itemize}
\item[(i)]{{$C$ is dualizing.}}
\item[(ii)]{{$\GCd_R(k)<\infty$.}}
\end{itemize}
\item[(3)]{{If $\GCd_R(X)<\infty$, then $\inf\uhom_R(X,C)=\depth_R(X)-\depth_R(C).$}}
\end{itemize}
\end{lem}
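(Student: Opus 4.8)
The plan is to treat statement (3) as the technical heart, deduce (1) from it almost formally, and handle the equivalence (2) separately. The engine behind (3) will be the following depth formula, valid for every nonzero $W\in\mathrm{D}_{\Box}^f(R)$:
$$\depth_R\uhom_R(W,C)=\depth_R(C)+\inf W.$$
To establish it I would take a minimal free resolution $F\xrightarrow{\simeq}W$; minimality forces the differentials of $k\otimes_R F$ to vanish, so $k\utp_R W\simeq\bigoplus_i\Sigma^i k^{\beta_i^R(W)}$ in $\mathrm{D}(R)$. Using the adjunction $\uhom_R(k,\uhom_R(W,C))\simeq\uhom_R(k\utp_R W,C)$ together with the paper's definition $\depth_R(-)=-\sup\uhom_R(k,-)$, I compute
$$\depth_R\uhom_R(W,C)=-\sup\uhom_R(k\utp_R W,C)=-\bigl(\sup\uhom_R(k,C)-\inf(k\utp_R W)\bigr).$$
Here $\sup\uhom_R(k,C)=-\depth_R(C)$ is immediate from the definition of depth, and $\inf(k\utp_R W)=\inf W$ by Nakayama's lemma (the bottom nonzero $\Tor$ sits in degree $\inf W$). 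This yields the displayed formula.

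With the formula in hand, (3) is immediate. Set $Y:=\uhom_R(X,C)$; since $X\in\mathcal{R}(C)$ we have $Y\in\mathrm{D}_{\Box}^f(R)$ and $\uhom_R(Y,C)\simeq X$. Applying the depth formula to $W=Y$ gives $\depth_R X=\depth_R\uhom_R(Y,C)=\depth_R(C)+\inf Y=\depth_R(C)+\inf\uhom_R(X,C)$, which rearranges to (3). For (1) I first specialize (3) to $X=R\in\mathcal{R}(C)$, where $\uhom_R(R,C)\simeq C$; this gives $\inf C=\depth R-\depth_R(C)$, i.e. $\depth_R(C)+\inf C=\depth R$. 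Then, unwinding the definition $\GCd_R(X)=\inf C-\inf\uhom_R(X,C)$ and substituting (3),
$$\GCd_R(X)=\inf C-(\depth_R X-\depth_R C)=(\depth_R C+\inf C)-\depth_R X=\depth R-\depth_R X,$$
which is (1) since $\Gd_C(X)=\GCd_R(X)$.

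For (2), the implication (i)$\Rightarrow$(ii) uses that a dualizing complex induces a duality on all of $\mathrm{D}_{\Box}^f(R)$, so every homologically finite complex---in particular $k$---is $C$-reflexive, forcing $\GCd_R(k)<\infty$. For the converse (ii)$\Rightarrow$(i), finiteness of $\GCd_R(k)$ means $k\in\mathcal{R}(C)$, hence $\uhom_R(k,C)\in\mathrm{D}_{\Box}^f(R)$ is homologically bounded; thus $\mu_R^i(\fm,C)=\dim_k\Ext_R^i(k,C)=0$ for $i\gg0$. Invoking Bass's theorem that a homologically finite complex has finite injective dimension precisely when its Bass numbers at $\fm$ vanish in all high degrees, I conclude $\id_R C<\infty$, so the semidualizing complex $C$ is dualizing.

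The main obstacle is the depth formula itself, specifically the passage through the infinite coproduct when $\pd_R W=\infty$: one must verify that $\uhom_R(-,C)$ carries $\bigoplus_i\Sigma^i k^{\beta_i^R(W)}$ to the product $\prod_i\Sigma^{-i}\uhom_R(k,C)^{\beta_i^R(W)}$, and that the factors accumulating toward $-\infty$ do not disturb the supremum, so that $\sup\uhom_R(k\utp_R W,C)$ is genuinely controlled by the single bottom degree $\inf W$. Once this is checked, the remaining arguments are formal. A secondary point requiring care is the classical input for (2), namely that finite injective dimension of a homologically finite complex can be detected by its Bass numbers at the maximal ideal alone.
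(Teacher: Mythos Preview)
Your argument is correct. The depth formula $\depth_R\uhom_R(W,C)=\depth_R(C)+\inf W$ goes through exactly as you outline: the minimal free resolution makes $k\utp_R W$ a complex with zero differential, $\Hom$ out of a coproduct becomes a product, homology commutes with products, and since the factor indexed by $i$ has supremum $-i-\depth_R(C)$ the supremum of the product is achieved at $i=\inf W$; the tail accumulating toward $-\infty$ is harmless. The deductions of (3) and (1) from this formula are clean, and your treatment of (2) via boundedness of $\uhom_R(k,C)$ together with the Bass-number criterion for finite injective dimension is the standard route.

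As for comparison with the paper: there is nothing to compare. The paper does not prove this lemma at all---it simply refers the reader to Christensen's \emph{Semi-dualizing complexes and their Auslander categories} \cite{CF1}. What you have written is essentially a self-contained reconstruction of the arguments found there (the identity $\depth_R(C)+\inf C=\depth R$ is Christensen's (3.4), the Auslander--Bridger formula in (1) is his (3.14), and the characterization in (2) is his (8.4)/(3.17)). So your proposal is not merely ``the same approach as the paper''---it actually supplies the proof that the paper omits.
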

\begin{defn}Let $C$ be a semidualizing complex and $X$ be a homologically finitely generated and bounded complex. Then we define garde of $X$ with respect to $C$ as follow: $$\gr_C (X):=\inf\{ i | \Ext_R^i(X,C)\neq 0\}=-\sup\uhom_R(X,C).$$ Also, we say that a finitely generated $R$-module $M$ with finite $G$-dimension is $G$-perfect if $\gr_R(M)=\Gd_R(M)$.
\end{defn}

In \cite[Theorem 3.5]{Ghosh}, the authors proved that for a $G$-perfect $R$-module $M$ the equality $r_R(M)=\nu(\Ext_R^{g}(M, R))r_R(R)$ holds true. Also, they provided some examples to show that the above equality holds true for some $R$-modules with finite $G$-dimension which is not necessarily $G$-perfect. In this direction, in \cite[Question 3.9]{Ghosh} they asked whether the aforementioned equality holds true for all $R$-modules with finite $G$-dimension. The following theorem, which is our main result, provides an affirmative answer to the above question.

\begin{thm}{\label{Th3}} Let $(R,\fm)$ be a local ring and $C$ be a semidualizing $R$-complex. Suppose that there exists an $R$-complex $Z\in\mathcal{D}_{\Box}^f(R)$ with $g:=\GCd_{R}(Z)<\infty$, then one has the following equality $$r_R(Z)=\nu(\Ext_R^{g-\inf C}(Z, C))\mu_{R}^{\depth_R(C)}(\fm,C).$$
In particular, if $M$ is a finitely generated $R$-module with finite Gorenstein dimension $g$, then $r_R(M)=\nu(\Ext_R^{g}(M, R))r_R(R)$.

\begin{proof} Let $Z$ be a finite $G_{C}$-dimension $R$-complex in $\mathcal{D}_{\Box}^f(R)$, and notice that by \cite[Lemma 3.12]{CF1} one has $\GCd_{R}(\Sigma^{-g}Z)=\GCd_{R}(Z)-g=0$. Set $d=\depth_R(Z)$. Then by Lemma \ref{Lem}(1) we have the following equalities
\[\begin{array}{rl}
\H_{-d}(\uhom_{R}(k,Z))&=\H_{-d}(\uhom_{R}(k,\Sigma^{g}(\Sigma^{-g}Z)))\\
&=\H_{-d}(\Sigma^g\uhom_{R}(k,\Sigma^{-g}Z))\\
&=\H_{-(d+g)}(\uhom_{R}(k,\Sigma^{-g}Z))\\
&=\H_{-\depth R}(\uhom_{R}(k,\Sigma^{-g}Z)).\\
\end{array}\]
Next, set $X:=\Sigma^{-g}Z$ and notice that $X^{\dag\dag}\simeq X$, where $(-)^\dag:=\uhom_R(-,C)$. Hence, the above equalities yields the following equalities
\[\begin{array}{rl}
r_R(Z)&=\vdim_k\H_{-\depth R}(\uhom_{R}(k,X))\\
&=\vdim_k\H_{-\depth R}(\uhom_{R}(k,X^{\dag\dag}))\\
&=\vdim_k\H_{-\depth R}(\uhom_{R}(k,\uhom_R(X^{\dag},C))\\
&=\vdim_k\H_{-\depth R}(\uhom_{R}(k\utp_R X^{\dag},C))\\
&=\vdim_k\H_{-\depth R}(\uhom_{R}(k\utp_R X^{\dag}\utp_k k,C))\\
&=\vdim_k\H_{-\depth R}(\uhom_{k}(k\utp_R X^{\dag},\uhom_R(k,C)))\\
&=\underset{\tiny{i+j=\depth R}}\sum\beta^{R}_i(X^{\dag})\mu_{R}^j(\fm,C).
\end{array}\]
On the other hand, in view of \cite[6.2.4, 6.2.9]{CHFox}, one has $\inf\{i\in\mathbb{Z} ~|~\beta_{i}^R(X^{\dag})\neq 0)\}=\inf X^{\dag}$ and $\inf\{i\in\mathbb{Z} ~|~\mu_{R}^i(\fm, C)\neq 0)\}=\depth_R(C)$. As $\GCd_{R}(X) =0$, by using Lemma \ref{Lem} and the fact that $\inf C+\depth_R(C)=\depth R$, one has $\inf X^{\dag}=\inf C$. Therefore, one can deduce that
$r_R(Z)=\beta^{R}_{\inf C}(X^{\dag})\mu_{R}^{\depth_R(C)}(\fm,C)$. 

It remains to show that $\beta^{R}_{\inf C}(X^{\dag})=\nu(\Ext_R^{g-\inf C}(Z, C))$. First notice that by Lemma \ref{Lem}, we have $\inf C -g=\inf C -\depth R+ \depth_R(Z)=\inf\uhom_{R}(Z,C)$. Now we have the following equalities
\[\begin{array}{rl}
\beta^{R}_{\inf C}(X^{\dag})&=\beta^{R}_{\inf C}(\uhom_{R}(\Sigma^{-g}Z,C))\\
&=\beta^{R}_{\inf C}(\Sigma^{g}\uhom_{R}(Z,C))\\
&=\vdim_k\H_{\inf C}(k\utp_R\Sigma^{g}\uhom_{R}(Z,C))\\
&=\vdim_k\H_{\inf C}(\Sigma^{g}(k\utp_R\uhom_{R}(Z,C))\\
&=\vdim_k\H_{\inf C -g}(k\utp_R\uhom_{R}(Z,C))\\
&=\vdim_k\H_{\inf Z^{\dag}}(k\utp_R\uhom_{R}(Z,C))\\
&\overset{\ddag}=\vdim_k(\H_0(k)\utp_R\H_{\inf Z^{\dag}}(\uhom_{R}(Z,C)))\\
&=\nu(\Ext_R^{-\inf Z^{\dag}}(Z, C))\\
&=\nu(\Ext_R^{g-\inf C}(Z, C)).
\end{array}\]
Note that the equality $\ddag$ follows from \cite[Lemma 4.3.8]{CHFox}.
\end{proof}
\end{thm}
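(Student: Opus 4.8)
The plan is to reduce to the case of Gorenstein dimension zero, rewrite the type of $Z$ as a single Bass number, and then split that Bass number into a Betti number times a Bass number of $C$ via a Künneth decomposition over the residue field $k$. First I would exploit the shift invariance of $\GCd$: by \cite[Lemma 3.12]{CF1} the complex $X := \Sigma^{-g}Z$ satisfies $\GCd_R(X)=0$, so $X$ is $C$-reflexive and $X\simeq X^{\dag\dag}$, where I abbreviate $(-)^\dag := \uhom_R(-,C)$. Lemma \ref{Lem}(1) then gives $\depth_R(X)=\depth R$, so bookkeeping with the shift identity $\uhom_R(k,\Sigma^g X)=\Sigma^g\uhom_R(k,X)$ shows that the type satisfies $r_R(Z)=\vdim_k\H_{-\depth R}(\uhom_R(k,X))$.

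The core step is to substitute $X\simeq\uhom_R(X^\dag,C)$ and push the computation onto $k$-vector spaces. Adjunction in $\mathrm{D}(R)$ converts $\uhom_R(k,\uhom_R(X^\dag,C))$ into $\uhom_R(k\utp_R X^\dag,C)$, and a second adjunction over $k$ (after inserting $k\utp_k k$) rewrites this as $\uhom_k(k\utp_R X^\dag,\uhom_R(k,C))$. As this is the derived Hom between two complexes of $k$-vector spaces, Künneth produces the sum $$r_R(Z)=\sum_{i+j=\depth R}\beta_i^R(X^\dag)\,\mu_R^j(\fm,C).$$

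To collapse the sum I would use the vanishing thresholds $\inf\{i\mid\beta_i^R(X^\dag)\neq 0\}=\inf X^\dag$ and $\inf\{j\mid\mu_R^j(\fm,C)\neq 0\}=\depth_R(C)$ from \cite{CHFox}, together with Lemma \ref{Lem}(3) and the equality $\inf C+\depth_R(C)=\depth R$, which force $\inf X^\dag=\inf C$. Since $i+j=\depth R=\inf C+\depth_R(C)$ and both indices are bounded below by these thresholds, only $(i,j)=(\inf C,\depth_R(C))$ contributes, leaving $r_R(Z)=\beta_{\inf C}^R(X^\dag)\,\mu_R^{\depth_R(C)}(\fm,C)$. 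Finally, rewriting $X^\dag=\Sigma^g\uhom_R(Z,C)$ and noting $\inf\uhom_R(Z,C)=\inf C-g$ identifies $\beta_{\inf C}^R(X^\dag)$ with the dimension of the bottom homology $k\otimes_R\H_{\inf C-g}(\uhom_R(Z,C))$, i.e. with $\nu(\Ext_R^{g-\inf C}(Z,C))$, by \cite[Lemma 4.3.8]{CHFox}. The module case follows by taking $C=R$, where $\inf C=0$ and $\mu_R^{\depth R}(\fm,R)=r_R(R)$.

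I expect the main difficulty to lie in the chain of derived adjunctions that produce the Künneth sum: one must verify that the swap isomorphism and the base change to $k$ are legitimate for the complexes at hand and that the resulting object genuinely behaves as a complex of $k$-vector spaces, so that Künneth applies termwise. This is precisely where the semidualizing hypothesis and $C$-reflexivity of $X$ are indispensable. The surrounding numerical bookkeeping---tracking the shifts and applying the infimum formulas of Lemma \ref{Lem}---is routine but must be executed carefully to land on the exact cohomological degree $g-\inf C$.
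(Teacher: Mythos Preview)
Your proposal is correct and follows essentially the same argument as the paper's own proof: the same shift to $X=\Sigma^{-g}Z$ with $\GCd_R(X)=0$, the same adjunction chain leading to the K\"unneth sum $\sum_{i+j=\depth R}\beta_i^R(X^\dag)\mu_R^j(\fm,C)$, the same collapse via the thresholds $\inf X^\dag=\inf C$ and $\depth_R(C)$, and the same identification of $\beta_{\inf C}^R(X^\dag)$ with $\nu(\Ext_R^{g-\inf C}(Z,C))$ through \cite[Lemma~4.3.8]{CHFox}. There is no substantive deviation.
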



As a consequence of Theorems \ref{Th3} we obtain the following characterizations of a dualizing complex $C$ in terms of the existence of certain complexes of finite $G_C$-dimension which also provides a suitable generalization of \cite[Corollary 3.10]{Ghosh}.

\begin{cor}\label{C31}Let $(R,\fm)$ be a local ring and $C$ be a semidualizing $R$-complex. Consider the following statements:
\begin{itemize}
\item[(i)]{$C$ is dualizing}.
\item[(ii)]{$R$ admits a Cohen-Macaulay complex $X$ with finite $G_{C}$-dimension $g$ such that $r_R(X)\leq\nu(\Ext_R^{g-\inf C}(X, C))$.}
\item[(iii)] {$R$ admits a Cohen-Macaulay complex $X$ with finite $G_{C}$-dimension  $g$ such that $\dim_R(X)=\dim_R(C)-\gr_C(X)$ and $r_R(X)\leq\nu(\Ext_R^{g-\inf C}(X, C))$.}
\end{itemize}
Then, the implications \emph{(i)$\Longleftrightarrow$(iii) and (i)$\Longrightarrow$(ii)} hold and the implication \emph{(ii)$\Longrightarrow$(i)} holds whenever $\amp X=0$.
\end{cor}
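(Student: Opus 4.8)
The engine of the proof is Theorem \ref{Th3} together with the identity $\mu_R^{\depth_R(C)}(\fm,C)=r_R(C)$. For any of the complexes $X$ occurring in (ii) or (iii) we have $\GCd_R(X)=g<\infty$, so Theorem \ref{Th3} gives
$$r_R(X)=\nu(\Ext_R^{g-\inf C}(X,C))\,r_R(C).$$
As noted in the proof of Theorem \ref{Th3}, the factor $\nu(\Ext_R^{g-\inf C}(X,C))$ is the number of generators of the lowest nonvanishing $\Ext$, hence nonzero for $X\not\simeq 0$; and $r_R(C)\ge 1$, since the bottom Bass number of a nonzero complex never vanishes. Consequently the inequality $r_R(X)\le\nu(\Ext_R^{g-\inf C}(X,C))$ appearing in (ii) and (iii) is \emph{equivalent} to $r_R(C)=1$. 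Thus every implication toward (i) reduces to the single assertion that $r_R(C)=1$ together with the stated Cohen--Macaulay data forces $C$ to be dualizing, while (i)$\Rightarrow$(iii) only requires exhibiting one witnessing complex.

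For (i)$\Rightarrow$(iii) I would take $X=k$. Since $C$ is dualizing, $\GCd_R(k)<\infty$ by Lemma \ref{Lem}(2), and $k$ is trivially Cohen--Macaulay. For a dualizing complex the duality $\uhom_R(-,C)$ satisfies $\sup\uhom_R(X,C)=\dim_R(X)-\dim_R(C)$ for every $X\in\mathcal{D}_{\Box}^f(R)$, which is exactly $\dim_R(X)=\dim_R(C)-\gr_C(X)$; so the dimension condition is automatic. Finally Theorem \ref{Th3} applied to $k$ reads $1=r_R(k)=\nu(\Ext_R^{g-\inf C}(k,C))\,r_R(C)$, forcing both factors to equal $1$, whence $r_R(k)=\nu(\Ext_R^{g-\inf C}(k,C))$. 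This gives (iii), and (iii)$\Rightarrow$(ii) is immediate, yielding (i)$\Rightarrow$(ii).

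For (iii)$\Rightarrow$(i) I first extract Cohen--Macaulayness of $C$. Rewriting the dimension hypothesis as $\dim_R(X)=\dim_R(C)+\sup\uhom_R(X,C)$ (using $\gr_C(X)=-\sup\uhom_R(X,C)$), and combining the Cohen--Macaulay equality $\dim_R(X)=\depth_R(X)$ with Lemma \ref{Lem}(3), $\inf\uhom_R(X,C)=\depth_R(X)-\depth_R(C)$, gives
$$\dim_R(C)-\depth_R(C)=\inf\uhom_R(X,C)-\sup\uhom_R(X,C)=-\amp\uhom_R(X,C)\le 0.$$
As $\depth_R(C)\le\dim_R(C)$ always holds, this forces $\dim_R(C)=\depth_R(C)$, i.e. $C$ is a Cohen--Macaulay complex (and $\uhom_R(X,C)$ is a shifted module). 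It then remains to prove the crux: \emph{a Cohen--Macaulay semidualizing complex $C$ with $r_R(C)=1$ is dualizing}. Passing to $\hat R$ (which preserves all hypotheses and the conclusion) and testing against the normalized dualizing complex $D$ of $\hat R$, $C$ is dualizing exactly when $\uhom_{\hat R}(\hat C,D)\simeq\Sigma^nR$, and I would show that Cohen--Macaulayness together with $r_R(C)=1$ forces the semidualizing complex $\uhom_{\hat R}(\hat C,D)$ to be cyclic, hence a shift of $R$. This last step is the main obstacle: it is the part that genuinely goes beyond the numerical identity of Theorem \ref{Th3}.

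For (ii)$\Rightarrow$(i) under the extra hypothesis $\amp X=0$, the complex $X$ is a shift of an honest module $N$, which is Cohen--Macaulay and of finite $G_C$-dimension. Here I would argue that a local ring admitting a nonzero Cohen--Macaulay module of finite $G_C$-dimension is itself Cohen--Macaulay --- a $G_C$-analogue of the classical statement for modules of finite projective dimension, obtained from the Auslander--Bridger equality of Lemma \ref{Lem}(1) together with a New-Intersection-type inequality $\dim R-\dim_R N\le\GCd_R(N)$. Over a Cohen--Macaulay ring every semidualizing complex is a shift of a maximal Cohen--Macaulay semidualizing module, so $C$ reduces to such a module with $r_R(C)=1$, which is the canonical module and hence dualizing. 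The point isolated by $\amp X=0$ is precisely that this module-level intersection inequality, and thus the reduction to the Cohen--Macaulay case, fails for genuine complexes --- which is why (iii) must instead supply the dimension condition.
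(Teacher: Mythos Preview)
Your reduction is the same as the paper's: Theorem \ref{Th3} turns the inequality in (ii)/(iii) into $r_R(C)=1$, and everything then hinges on external ``type-one'' results. The paper proceeds exactly this way, but cites \cite{MRZ} for each of the hard steps rather than sketching them: it invokes \cite[Proposition 3.9]{MRZ} for the Cohen--Macaulayness of $C$ in (iii)$\Rightarrow$(i) (your direct computation via Lemma \ref{Lem}(3) is a clean proof of precisely that statement), and \cite[Theorem 3.3]{MRZ} and \cite[Theorem 3.7]{MRZ} for the two ``$r_R(C)=1\Rightarrow C$ dualizing'' conclusions that you correctly flag as the genuine obstacles.

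Two points of divergence are worth noting. First, for (i)$\Rightarrow$(iii) the paper takes $X=C$ rather than $X=k$; this is a bit more direct since $\uhom_R(C,C)\simeq R$ makes $\gr_C(C)=0$ and $\nu(\Ext_R^{g-\inf C}(C,C))=\nu(R)=1$ immediate, without needing the Bass-number computation for $k$ or the duality formula $\sup\uhom_R(X,C)=\dim_R(X)-\dim_R(C)$ (which you asserted for all $X$ but only needed, and verified, for $X=k$). Second, in your route for (ii)$\Rightarrow$(i) you invoke a New-Intersection-type inequality $\dim R-\dim_R N\le\GCd_R(N)$ to force $R$ Cohen--Macaulay. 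This is the one place where your sketch is on shakier ground: such an inequality is \emph{not} a formal consequence of the Auslander--Bridger formula, and for ordinary $G$-dimension it is not known in general. The paper sidesteps this entirely by appealing to \cite[Theorem 3.7]{MRZ}, which handles the implication directly from $r_R(C)=1$ together with the existence of a Cohen--Macaulay \emph{module} of finite $G_C$-dimension; whatever argument lies behind that citation is doing more work than the intersection inequality you proposed.
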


\begin{proof}Fist note that a dualizing complex $C$ is a type one Cohen-Macaulay $R$-complex with finite $G_{C}$-dimension and $\gr_C(C)=0$. Therefore, the implications (i)$\Longrightarrow$(ii) and (i)$\Longrightarrow$(iii) follow for $X=C$. For the implications (ii)$\Longrightarrow$(i), by Theorem \ref{Th3}, one has $r_R(C)=1$, and so it follows from \cite[Theorem 3.7]{MRZ}. For the (iii)$\Longrightarrow$(i), the assumption $r_R(X)\leq\nu(\Ext_R^{g-\inf C}(X, C))$ and Theorem \ref{Th3} implies that $r_R(C)=1$. Also, in view of \cite[Proposition 3.9]{MRZ}, one has $C$ is Cohen-Macaulay, and so by \cite[Theorem 3.3]{MRZ} the assertion holds.
\end{proof}

\section{Finite injective dimension}

In this section, we address a question of Ghosh and Puthenpurakal \cite[Question 4.2]{Ghosh}. Additionally, we present a concise proof of Ghosh and Takahashi's result \cite[Corollary 1.3]{GTA}. Furthermore, we extend a theorem by Peskine and Szpiro, which asserts that if a local ring $R$ possesses a cyclic module with finite injective dimension, then $R$ is Gorenstein.

\begin{lem}\label{GG}
Let $X:= \cdots \longrightarrow X_{l+1}\overset{\partial_{l+1}^X} \longrightarrow  X_{l} \overset{\partial_{l}^X}\longrightarrow  X_{l-1} \to \cdots$ be a bounded complex such that $\id_R \H_i(X)<\infty$ for all $i$. Then $\id_R X<\infty$.
\begin{proof}
Let $n$ be the number of non-zero homologies of $X$. We proceed by induction on $n$. Assume that $n=1$. Then without loss of generality, we may assume $\H_0(X)\neq 0$. Let $Z=\Ker(\partial_0)$. Then the complex $X$ is isomorphic to $Y:=\cdots \longrightarrow X_1 \longrightarrow Z  \longrightarrow 0$ and the latter is isomorphic to $\H_0(X)$ in $\mathrm{D}(R)$. Since $\id_R\H_0(X)$ is finite, so is $\id_RX$.

Let $n>1$. Again without loss of generality, we may assume $\inf X=0$, and same as the above we have $X \simeq Y$. Since $\H_0(X)=\H_0(Y)$ has a finite injective dimension, there exists a bounded injective resolution $I$ of $\H_0(X)$. Let $C:=\cdots \longrightarrow X_1  \longrightarrow Z  \longrightarrow I^0  \longrightarrow I^1  \longrightarrow \cdots$, where the map $Z \longrightarrow I^0$ is the composition of $Z \twoheadrightarrow \H_0(Y)$ and $\H_0(Y) \hookrightarrow I^0$. This gives an exact sequence of complexes $0 \longrightarrow Y  \longrightarrow C  \longrightarrow I \longrightarrow 0$, and also one has $\H_i(C)=0$ for $i\leq 0$. Therefore,  by inductive hypothesis $\id_R C<\infty$. On the other hand, we have that $\id_RI$ is finite, and so $\id_RY<\infty$ as well. Therefore $\id_RX<\infty$.
\end{proof}
\end{lem}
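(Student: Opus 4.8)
The plan is to induct on the number $n$ of nonzero homology modules of $X$, organizing the induction through the standard soft truncations. The conceptual input is that the complexes of finite injective dimension form a thick subcategory of $\mathrm{D}_{\Box}(R)$; concretely, in any distinguished triangle $A\to B\to C\to\Sigma A$ in $\mathrm{D}_{\Box}(R)$, if two of $A,B,C$ have finite injective dimension then so does the third. This two-out-of-three property follows from the long exact sequence of Bass numbers: applying $\uhom_R(R/\fp,-)$ for $\fp\in\Spec R$ to the triangle yields a long exact sequence relating $\Ext^i_R(R/\fp,A)$, $\Ext^i_R(R/\fp,B)$, and $\Ext^i_R(R/\fp,C)$, and since for a homologically bounded $Y$ one has $\id_RY<\infty$ if and only if the Bass numbers $\mu^i_R(\fp,Y)$ vanish for all $\fp$ and all $i\gg0$, vanishing for large $i$ at two of the vertices forces it at the third.

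Granting this, I would fix $s:=\inf X$ and consider the soft truncation short exact sequence of complexes
$$0\longrightarrow \tau_{\ge s+1}X\longrightarrow X\longrightarrow \tau_{\le s}X\longrightarrow 0,$$
where $\tau_{\ge s+1}X$ is the subcomplex agreeing with $X$ in degrees $\ge s+2$, equal to $\Ker\partial^X_{s+1}$ in degree $s+1$, and zero below, while $\tau_{\le s}X$ is the corresponding quotient. By construction $\H_i(\tau_{\le s}X)=\H_i(X)$ for $i\le s$ and is zero otherwise, and $\H_i(\tau_{\ge s+1}X)=\H_i(X)$ for $i\ge s+1$ and is zero otherwise. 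Because $s=\inf X$, the homology of $\tau_{\le s}X$ is concentrated in the single degree $s$, so $\tau_{\le s}X\simeq\Sigma^s\H_s(X)$ in $\mathrm{D}(R)$; as $\id_R\H_s(X)<\infty$ by hypothesis, this shift has finite injective dimension. Meanwhile $\tau_{\ge s+1}X$ is again a bounded complex whose homology modules are among the $\H_i(X)$, hence all of finite injective dimension, but it has only $n-1$ nonzero homologies.

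I would then apply the inductive hypothesis to $\tau_{\ge s+1}X$ to conclude $\id_R\tau_{\ge s+1}X<\infty$, and feed the triangle
$$\tau_{\ge s+1}X\longrightarrow X\longrightarrow \tau_{\le s}X\longrightarrow \Sigma\,\tau_{\ge s+1}X$$
induced by the short exact sequence into the two-out-of-three property to obtain $\id_RX<\infty$, completing the induction. The base case $n=1$ is exactly the argument already recorded above: $X$ is isomorphic in $\mathrm{D}(R)$ to a single shifted homology module of finite injective dimension.

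The main obstacle is bookkeeping rather than conceptual: one must verify that the soft truncations genuinely realize the claimed splitting of the homology and give an honest short exact sequence of complexes, and one must have the two-out-of-three statement for finite injective dimension available in the derived setting for complexes whose homology need not be finitely generated. Both are standard (see, e.g., the treatment of injective dimension of complexes in \cite{CHFox}), so the proof reduces to the clean induction above.
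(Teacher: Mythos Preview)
Your proposal is correct and follows essentially the same strategy as the paper: induct on the number of nonzero homologies, peel off the lowest homology module via a distinguished triangle, and invoke the two-out-of-three property for finite injective dimension. The only cosmetic difference is that you use the soft truncation $\tau_{\ge s+1}X\to X\to\tau_{\le s}X$ directly, whereas the paper realizes the same triangle by hand, splicing the truncated complex $Y$ with a bounded injective resolution of $\H_0(X)$; the resulting complex $C$ in the paper is quasi-isomorphic to your $\tau_{\ge 1}X$, so the two arguments coincide in $\mathrm{D}(R)$.
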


\begin{prop}\label{MM}Let $X, Y$ be complexes such that $X\in \mathrm{D}_{\sqsubset}^f(R)$ and $Y\in \mathrm{D}_{\sqsupset}^f(R)$. Assume $\Ext^i(X,Y)=0$ for $i\gg 0$ and $\id_R\Ext^i(X,Y)<\infty$ for all $i$. Then, both $\pd_R X$ and $\id_R Y$ are finite.

\begin{proof}First note that $\Ext^{i}(X,Y)=\H_{-i}(\uhom_R(X,Y)$ for all $i$ and also by \cite[Lemma 4.2.9]{CHFox} one has $\uhom_R(X,Y)\in\mathrm{D}_{\sqsubset}^f(R)$. Therefore, in view of the Lemma \ref{GG} and \cite[Lemma 6.2.10]{CHFox} one can deduce that $\pd_R X$ and $\id_R Y$ are finite, as required.
\end{proof}
\end{prop}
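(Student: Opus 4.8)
The plan is to funnel both conclusions through the single complex $W:=\uhom_R(X,Y)$, showing first that $\id_R W<\infty$ and then splitting this finiteness into the two desired statements about $X$ and $Y$. The hypotheses are precisely tailored to control $W$: its homology modules are, up to reindexing, the modules $\Ext^i(X,Y)$, so the assumption on their injective dimensions is really an assumption on the homology of $W$, and the vanishing of $\Ext^i(X,Y)$ for $i\gg0$ is exactly what will make $W$ homologically bounded.

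First I would record that $\Ext^i(X,Y)=\H_{-i}(W)$ for every $i$, so that $\H_j(W)\cong\Ext^{-j}(X,Y)$. By \cite[Lemma 4.2.9]{CHFox} the assumptions $X\in\mathrm{D}_{\sqsubset}^f(R)$ and $Y\in\mathrm{D}_{\sqsupset}^f(R)$ force $W\in\mathrm{D}_{\sqsubset}^f(R)$; in particular $W$ has finitely generated homology and is bounded on one end. The hypothesis $\Ext^i(X,Y)=0$ for $i\gg0$ says $\H_j(W)=0$ for $j\ll0$, which bounds the homology of $W$ on the other end. Hence $W$ is homologically bounded, and I may represent it by an honest bounded complex without changing $\id_R W$. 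By hypothesis every homology module $\H_j(W)\cong\Ext^{-j}(X,Y)$ has finite injective dimension, so Lemma \ref{GG} applies to this bounded representative and yields $\id_R W<\infty$, that is, $\id_R\uhom_R(X,Y)<\infty$.

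It remains to convert $\id_R\uhom_R(X,Y)<\infty$ into the finiteness of both $\pd_R X$ and $\id_R Y$. For this I would invoke \cite[Lemma 6.2.10]{CHFox}, which for $X\in\mathrm{D}_{\sqsubset}^f(R)$ and $Y\in\mathrm{D}_{\sqsupset}^f(R)$ characterizes finiteness of the injective dimension of $\uhom_R(X,Y)$ in terms of $\pd_R X$ and $\id_R Y$, delivering both at once. The main obstacle is really this middle step rather than the last formal one: passing from ``each homology module of $W$ has finite injective dimension'' to ``$W$ itself has finite injective dimension'' is not automatic, and is exactly where Lemma \ref{GG} does the work, via a d\'evissage along short exact sequences of complexes. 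Correspondingly, the role of the two hypotheses is to supply precisely the input Lemma \ref{GG} requires, namely boundedness of $W$ (from the $\Ext$-vanishing together with \cite[Lemma 4.2.9]{CHFox}) and finite injective dimension of its homology (the standing assumption); once $\id_R\uhom_R(X,Y)<\infty$ is in hand, the splitting of finiteness is a standard derived-category computation.
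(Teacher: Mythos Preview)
Your proposal is correct and follows essentially the same approach as the paper's proof: both set $W=\uhom_R(X,Y)$, use \cite[Lemma~4.2.9]{CHFox} together with the $\Ext$-vanishing to see that $W$ is homologically bounded with degreewise finitely generated homology, apply Lemma~\ref{GG} to conclude $\id_R W<\infty$, and then invoke \cite[Lemma~6.2.10]{CHFox} to split this into $\pd_R X<\infty$ and $\id_R Y<\infty$. If anything, your write-up is more explicit than the paper's in spelling out why $W$ is bounded on both ends and why Lemma~\ref{GG} applies to a bounded representative.
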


In \cite[Proposition 4.1]{Ghosh}, the authors proved that over a Cohen-Macaulay local ring $R$ admitting a canonical module $\omega_R$, if $M$ is finitely generated $R$-module such that $\id_R\Ext^i(M,\omega_R)<\infty$ for all $i$, then $\pd_R M$ is finite. In this regard, in \cite[Question 4.2]{Ghosh}, they asked whether one can replace $\omega_R$ with any arbitrary $R$-module of finite injective dimension. The following corollary affirmatively answers their question even without assuming the finiteness of the injective dimension of the $R$-module $N$. Also, this result provides a partial answer to \cite[Question 2.18]{GTA}

\begin{cor}Let $(R,\fm)$ be a local ring and suppose that $M, N$ are finitely generated $R$-modules such that $\Ext^i(M,N)=0$ for $i\gg 0$, and $\id_R\Ext^i(M,N)<\infty$ for all $i$. Then, $\pd_R M$ and $\id_R N$ both are finite.
In particular, if $\Ext^i(M,M)=0$ for $i\gg 0$ and $\id_R\Ext^i(M,M)<\infty$ for all $i$, then $R$ is Gorenstein.
\end{cor}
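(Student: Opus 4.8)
The plan is to obtain both assertions directly from Proposition~\ref{MM} (that is, Proposition~(B)), by regarding the finitely generated modules $M$ and $N$ as complexes concentrated in homological degree $0$. Such a module is simultaneously homologically left- and right-bounded with finitely generated homology, so $M\in\mathrm{D}_{\sqsubset}^f(R)$ and $N\in\mathrm{D}_{\sqsupset}^f(R)$, and the module-level groups $\Ext^i(M,N)$ agree with the complex-level invariants $\H_{-i}(\uhom_R(M,N))$. The two hypotheses, namely $\Ext^i(M,N)=0$ for $i\gg 0$ and $\id_R\Ext^i(M,N)<\infty$ for every $i$, are then exactly the hypotheses of Proposition~\ref{MM}. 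Applying it yields $\pd_R M<\infty$ and $\id_R N<\infty$ in one step, which settles the first assertion.

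For the final statement I would specialize to $N=M$, where $M$ is taken to be nonzero (this is needed, since for $M=0$ the hypotheses hold vacuously and no conclusion about $R$ is possible). The first part then shows that the single nonzero finitely generated module $M$ has $\pd_R M<\infty$ and $\id_R M<\infty$ at the same time. From here the conclusion that $R$ is Gorenstein follows from the classical theorem of Foxby that a local ring admitting a nonzero finitely generated module of finite projective dimension and finite injective dimension must be Gorenstein.

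The only genuine input beyond Proposition~\ref{MM} is this coexistence theorem, so the remaining work is merely to check its hypotheses. I would note in passing that finiteness of $\id_R M$ for a nonzero finitely generated module already forces $R$ to be Cohen-Macaulay by Bass's theorem, so that hypothesis is automatic; the additional finiteness of $\pd_R M$ is what upgrades the conclusion from Cohen-Macaulay to Gorenstein. I do not anticipate any serious obstacle: the reduction to Proposition~\ref{MM} is purely formal, and the step from ``finite projective and finite injective dimension'' to ``Gorenstein'' is a standard citation. The one point demanding care is the nonvanishing hypothesis $M\neq 0$, which must be recorded explicitly for the ``in particular'' clause.
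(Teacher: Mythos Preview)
Your proposal is correct and matches the paper's approach: the paper states this corollary without proof, treating it as an immediate consequence of Proposition~\ref{MM}, and your argument fills in exactly those details (applying Proposition~\ref{MM} to $X=M$, $Y=N$ viewed as complexes in degree~$0$, then invoking Foxby's theorem for the Gorenstein conclusion when $N=M$). Your observation that $M\neq 0$ is needed for the ``in particular'' clause is a valid point the paper leaves implicit.
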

Notice that the equality $\dim(R)-\depth(M)=\sup\{~i~|~\Ext^i_R(M,M)\neq 0\}$ holds for all finitely generated $R$-modules $M$ with finite injective dimension. Therefore, the following corollary which is an immediate consequence of the Proposition \ref{MM}, provides an affirmative answer to  \cite[Question 4.7]{Ghosh}.
\begin{cor}\label{C3}Let $M$ be a nonzero $R$-module such that the injective dimensions of
$M$ and $\Ext^i_R(M,M)$ are finite for all $0 \leq i \leq \dim(R)-\depth(M)$. Then $R$ is Gorenstein.
\end{cor}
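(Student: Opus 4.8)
The plan is to deduce Corollary \ref{C3} from Proposition \ref{MM} (equivalently, from the preceding corollary) applied with $X=Y=M$. Concretely, I would show that the two hypotheses of that proposition, namely $\Ext^i(M,M)=0$ for $i\gg 0$ and $\id_R\Ext^i(M,M)<\infty$ for \emph{all} $i$, are already forced by the weaker-looking assumptions of the corollary, where finiteness of $\id_R\Ext^i(M,M)$ is only demanded in the range $0\le i\le\dim R-\depth M$. Once both hypotheses are in place, Proposition \ref{MM} gives $\pd_R M<\infty$ and $\id_R M<\infty$ simultaneously, and a nonzero finitely generated module of finite projective and finite injective dimension makes $R$ Gorenstein, which is the desired conclusion.

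The crucial input is the displayed equality recorded just before the statement: since $\id_R M<\infty$ (this finiteness is part of the hypothesis), one has $\dim R-\depth M=\sup\{i\mid\Ext^i_R(M,M)\neq 0\}$. I would read this as the vanishing $\Ext^i_R(M,M)=0$ for every $i>\dim R-\depth M$, which immediately supplies the first hypothesis of Proposition \ref{MM}. For the second hypothesis I would check $\id_R\Ext^i_R(M,M)<\infty$ for every integer $i$: for $i<0$ and for $i>\dim R-\depth M$ the module $\Ext^i_R(M,M)$ is zero, so its injective dimension is trivially finite, while for $0\le i\le\dim R-\depth M$ it is exactly the hypothesis of the corollary. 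Thus the limited-range assumption is upgraded to the full statement ``for all $i$''.

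With both hypotheses verified, applying Proposition \ref{MM} to $X=Y=M$ yields that $\pd_R M$ and $\id_R M$ are finite, and therefore $R$ is Gorenstein, as in the final assertion of the preceding corollary. There is no serious obstacle here; the only point requiring care is the bookkeeping of index ranges, and in particular the observation that the remark's equality (which genuinely uses $\id_R M<\infty$) is precisely what converts the stated finite-range hypothesis into the honest vanishing and finite-injective-dimension conditions needed to invoke Proposition \ref{MM}.
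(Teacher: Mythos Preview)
Your proposal is correct and matches the paper's own argument exactly: the paper records the equality $\dim R-\depth M=\sup\{i\mid\Ext^i_R(M,M)\neq 0\}$ just before the statement precisely so that the limited-range hypothesis can be upgraded to all $i$, after which the corollary is, in the paper's words, ``an immediate consequence of the Proposition~\ref{MM}''. Your unpacking of the index bookkeeping and the appeal to the ``In particular'' clause of the preceding corollary is faithful to this.
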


The following example shows that the finiteness of injective dimensions of Ext modules in Proposition \ref{MM} is necessary.
\begin{exam}Let $(R,\fm, k)$ be a non regular Gorenstein local ring of dimension $d$. Then, one has $\Ext^{i}_R (k,R)=0$ for all $i\neq d$ and $\Ext^{d}_R (k,R)\cong k$
Therefore, $\id_R(\Ext^d_{R}(k,R))=\infty$ and also $\pd_R(k)=\infty$.
\end{exam}
\begin{thm}\label{AR} The Auslander-Reiten conjecture holds true for a nonzero finitely generated $R$-module $M$ over a commutative Noetherian ring $R$ when at least one of $\Hom_R(M,R)$ or $\Hom_R(M,M)$ has finite injective dimension.
\begin{proof}By the assumptions of Auslander-Reiten conjecture, we have $\Ext^i_R(M,R)=0$ and $\Ext^i_R(M,M)=0$ for all $i\geq 1$. Therefore, in view of Proposition \ref{MM} and Corollary \ref{C3}, one has $\pd_R M$ and $\id_R R$ both are finite. Note that $\uhom_R(M,R)\simeq\Hom_R(M,R)$ in $\mathrm{D}(R)$. Now, consider the following equalities
\[\begin{array}{rl}
\mu_{R}^{t}(\fm, \Hom_R(M,R))&=\vdim_k\H_{-t}(\uhom_R(k, \Hom_R(M,R)))\\
&=\vdim_k\H_{-t}(\uhom_{R}(k, \uhom_{R}(M,R)))\\
&=\vdim_k\H_{-t}(\uhom_{R}(k\utp_{R}M,R))\\
&=\vdim_k\H_{-t}(\uhom_{R}(M\utp_{R}k\utp_{k}k,R))\\
&=\vdim_k\H_{-t}(\uhom_{k}(M\utp_{R}k,\uhom_{R}(k,R)))\\
&=\underset{\tiny{i\in\mathbb{Z}}}\sum\vdim_k\H_{i}(M\utp_{R}k)\vdim_k\H_{i-t}(\uhom_{R}(k,R))\\
&=\underset{\tiny{i+j=t}}\sum\beta^{R}_i(M)\mu_{R}^{j}(\fm,R).
\end{array}\]
Set $\dim R=d$ and note that $\mu_{R}^{d+1}(\fm, \Hom_R(M,R))=0$ and $\mu_{R}^{d+1}(\fm, R)=0$ as injective dimension of the $R$-modules $\Hom_R(M,R)$ and $R$ are finite. Therefore, by the above equality one has $$\beta^{R}_1(M)\mu_{R}^{d}(\fm,R)+\beta^{R}_2(M)\mu_{R}^{d-1}(\fm,R)+\cdots+\beta^{R}_{d+1}(M)\mu_{R}^{0}(\fm,R)=0.$$
Since $\mu_{R}^{d}(\fm,R)\neq 0$, we have $\beta^{R}_1(M)=0$, and therefore $M$ is a free $R$-module as required.
\end{proof}
\end{thm}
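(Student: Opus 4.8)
The plan is to strip the Auslander-Reiten hypotheses down to a setting where Proposition \ref{MM} delivers finite projective dimension for $M$ and Gorensteinness for $R$, and then to read off freeness of $M$ from a single Bass number of $\Hom_R(M,R)$ using the same adjunction bookkeeping as in Theorem \ref{Th3}. First I would record that the conjecture's hypotheses mean $\Ext^i_R(M,R)=0$ and $\Ext^i_R(M,M)=0$ for all $i\ge 1$, so that in $\mathrm{D}(R)$ the complexes $\uhom_R(M,R)\simeq\Hom_R(M,R)$ and $\uhom_R(M,M)\simeq\Hom_R(M,M)$ are honest modules concentrated in degree $0$. The target reduces to showing $\beta_1^R(M)=0$ (equivalently, $M$ is free) together with $R$ Gorenstein.

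The first substantive step is to obtain $\pd_R M<\infty$ and $\id_R R<\infty$ in either of the two cases. If $\id_R\Hom_R(M,R)<\infty$, then since the higher $\Ext^i(M,R)$ all vanish we have $\id_R\Ext^i(M,R)<\infty$ for every $i$ and $\Ext^i(M,R)=0$ for $i\gg 0$, so Proposition \ref{MM} with $X=M$, $Y=R$ gives both $\pd_R M<\infty$ and $\id_R R<\infty$; the latter already forces $R$ Gorenstein. If instead $\id_R\Hom_R(M,M)<\infty$, the same reasoning with $Y=M$ yields $\pd_R M<\infty$ and $\id_R M<\infty$, and then Corollary \ref{C3}---whose hypotheses amount to finiteness of $\id_R M$ and $\id_R\Hom_R(M,M)$, the higher self-$\Ext$ vanishing---gives $R$ Gorenstein. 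In this second case I would also note that $\pd_R M<\infty$ together with $\id_R R<\infty$ forces $\id_R\Hom_R(M,R)<\infty$, which is needed below. Either way I arrive at a common situation: $R$ is Gorenstein of dimension $d$, $\pd_R M<\infty$, and $\id_R\Hom_R(M,R)<\infty$.

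The heart of the argument is a Bass-number identity obtained exactly as in the proof of Theorem \ref{Th3}. Applying $\uhom_R(k,-)$ to $\Hom_R(M,R)\simeq\uhom_R(M,R)$ and using derived Hom-tensor adjunction,
\[
\uhom_R(k,\uhom_R(M,R))\simeq\uhom_R(k\utp_R M,R)\simeq\uhom_k\bigl(k\utp_R M,\uhom_R(k,R)\bigr),
\]
and then splitting the right-hand side over the field $k$ by a K\"unneth argument, I would derive
\[
\mu_R^{t}(\fm,\Hom_R(M,R))=\sum_{i+j=t}\beta_i^R(M)\,\mu_R^{j}(\fm,R).
\]

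Finally I would evaluate this in the critical degree $t=d+1$. Since $R$ is Gorenstein of dimension $d$, its Bass numbers are concentrated in a single degree, $\mu_R^{j}(\fm,R)=\delta_{jd}$; and by Bass's theorem the finitely generated module $\Hom_R(M,R)$, having finite injective dimension, satisfies $\id_R\Hom_R(M,R)=\depth R=d$, so $\mu_R^{d+1}(\fm,\Hom_R(M,R))=0$. Substituting $t=d+1$ collapses the sum to the single surviving term $\beta_1^R(M)\,\mu_R^{d}(\fm,R)$, whence $\beta_1^R(M)\cdot 1=0$ and $M$ is free. The decisive point, and the step I expect to require the most care, is precisely this degree comparison: finite injective dimension pins the Bass numbers of $\Hom_R(M,R)$ at level $d$, exactly one above the single level carrying the Bass number of $R$, and it is this mismatch that annihilates the first Betti number. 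The only other delicate bookkeeping is the second case, where Gorensteinness is obtained only after routing through Corollary \ref{C3} and one must separately confirm $\id_R\Hom_R(M,R)<\infty$ before the degree-$(d+1)$ vanishing can be invoked; the derived-adjunction identity itself is routine once the resolutions are set up.
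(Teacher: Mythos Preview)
Your proposal is correct and follows essentially the same route as the paper: invoke Proposition~\ref{MM} (and Corollary~\ref{C3} in the $\Hom_R(M,M)$ case) to obtain $\pd_R M<\infty$ and $R$ Gorenstein, then derive the Bass-number identity $\mu_R^{t}(\fm,\Hom_R(M,R))=\sum_{i+j=t}\beta_i^R(M)\mu_R^{j}(\fm,R)$ via adjunction and read off $\beta_1^R(M)=0$ at $t=d+1$. You are in fact slightly more careful than the paper in explicitly separating the two cases and noting that in the $\Hom_R(M,M)$ case one must first deduce $\id_R\Hom_R(M,R)<\infty$ before invoking the degree-$(d+1)$ vanishing.
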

\begin{thm}{\label{Th2}} Let $(R,\fm, k)$ be a local ring, $C$ be a semidualizing $R$-complex, $X\in\mathrm{D}_{\Box}^f(R)$ and that $\id_R(C\utp_R X)<\infty$. Then, for all integers $t$, there is an equality $$\beta^{R}_{t}(X)=\underset{\tiny{i+j=t}}\sum\mu_{R}^i(\fm,C)\mu_{R}^{-j}(\fm,C\utp_R X).$$
\begin{proof}By \cite[Proposition 4.4]{CF1} and \cite[4.6(a)]{CF1}, we have $X\in\mathcal{A}_C(R)$, and so the canonical map $\gamma_X^C: X\rightarrow\uhom_R(C,C\utp_R X)$ is an isomorphism. Therefore, in view of \cite[Theorem 4.4.6]{CHFox}, we have the following equalities

\[\begin{array}{rl}
\beta^{R}_{t}(X)&=\vdim_k\H_{t}(k\utp_R X)\\
&=\vdim_k\H_{t}(k\utp_R \uhom_R(C,C\utp_R X))\\
&=\vdim_k\H_{t}(\uhom_{R}(\uhom_{R}(k,C),C\utp_R X))\\
&=\vdim_k\H_{t}(\uhom_{R}(\uhom_{R}(k,C)\utp_{k}k,C\utp_R X))\\
&=\vdim_k\H_{t}(\uhom_{k}(\uhom_{R}(k,C),\uhom_{R}(k,C\utp_R X)))\\
&=\underset{\tiny{i\in\mathbb{Z}}}\sum\vdim_k\H_{-i}(\uhom_{R}(k,C))\vdim_k\H_{-i+t}(\uhom_{R}(k,C\utp_R X))\\
&=\underset{\tiny{i+j=t}}\sum\mu_{R}^i(\fm, C)\mu_{R}^{-j}(\fm,C\utp_R X).
\end{array}\] This finishes the proof.
\end{proof}
\end{thm}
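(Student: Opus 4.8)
The plan is to reduce the computation of $\beta^R_t(X)$ to a convolution of Bass numbers by running $X$ through Foxby equivalence and then applying a homomorphism-evaluation isomorphism to transport everything to complexes of $k$-vector spaces.

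First I would show that $X$ lies in the Auslander class $\mathcal{A}_C(R)$. The hypothesis $\id_R(C\utp_R X)<\infty$ forces $C\utp_R X$ into the Bass class $\mathcal{B}_C(R)$, since every homologically bounded complex of finite injective dimension is $C$-reflexive in the Bass sense; by the Foxby equivalence furnished by the adjoint quasi-inverse functors $C\utp_R - : \mathcal{A}_C(R)\leftrightarrows\mathcal{B}_C(R) : \uhom_R(C,-)$, membership of $C\utp_R X$ in $\mathcal{B}_C(R)$ then yields $X\in\mathcal{A}_C(R)$. I expect \cite[Proposition 4.4]{CF1} together with \cite[4.6(a)]{CF1} to give exactly this. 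As a consequence, the canonical morphism $\gamma_X^C:X\to\uhom_R(C,C\utp_R X)$ is invertible in $\mathrm{D}(R)$, so $X$ may be replaced by $\uhom_R(C,C\utp_R X)$ inside $k\utp_R-$ without changing $\beta^R_t(X)=\rank_k\H_t(k\utp_R X)$.

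Writing $W:=C\utp_R X$, the heart of the argument is the chain of derived isomorphisms
$$k\utp_R\uhom_R(C,W)\simeq\uhom_R(\uhom_R(k,C),W)\simeq\uhom_k(\uhom_R(k,C),\uhom_R(k,W)).$$
The first isomorphism is the homomorphism-evaluation morphism $\theta_{k\,C\,W}$, which is invertible here precisely because $k$ is homologically finite, $C\in\mathrm{D}_{\Box}^f(R)$, and $W$ has finite injective dimension; this is where \cite[Theorem 4.4.6]{CHFox} enters. The second isomorphism is Hom-tensor adjunction, obtained after inserting $\utp_k k$, which lands the computation among complexes of $k$-vector spaces. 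Because $k$ is a field, both $\uhom_R(k,C)$ and $\uhom_R(k,W)$ are complexes of $k$-vector spaces whose homologies are the Bass numbers of $C$ and of $W$, and $\uhom_k$ of two such complexes decomposes degreewise. Taking $\rank_k\H_t$ then produces $\sum_{i+j=t}\mu^i_R(\fm,C)\,\mu^{-j}_R(\fm,C\utp_R X)$ once indices are matched through $\mu^i_R(\fm,C)=\rank_k\H_{-i}(\uhom_R(k,C))$ and $\mu^{-j}_R(\fm,W)=\rank_k\H_{j}(\uhom_R(k,W))$, the condition $i+j=t$ being exactly the degree bookkeeping of the tensor/Hom convolution.

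The main obstacle I anticipate is the verification that the homomorphism-evaluation morphism is genuinely an isomorphism under the stated hypotheses: this is the single step that is not purely formal and that relies essentially on the finite injective dimension of $C\utp_R X$ together with the homological finiteness of $k$ and $C$. A secondary, purely technical difficulty is keeping the grading consistent through the shifts implicit in $\uhom_k$, so that the summation indices land on $i+j=t$ rather than on a shifted version; I would handle this by carefully tracking the identification $\mu^{-j}_R(\fm,W)=\rank_k\H_j(\uhom_R(k,W))$ at the last step.
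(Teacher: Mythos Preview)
Your proposal is correct and follows essentially the same route as the paper: you establish $X\in\mathcal{A}_C(R)$ via \cite[Proposition 4.4]{CF1} and \cite[4.6(a)]{CF1}, replace $X$ by $\uhom_R(C,C\utp_R X)$, apply the homomorphism-evaluation isomorphism from \cite[Theorem 4.4.6]{CHFox}, pass through $\utp_k k$ and adjunction to reach $\uhom_k(\uhom_R(k,C),\uhom_R(k,C\utp_R X))$, and read off the convolution of Bass numbers. The paper's proof is line-for-line the same argument, including the identical citations.
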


\begin{cor}Let $(R,\fm)$ be a $d$-dimensional local ring, $C$ be a semidualizing $R$-module and $M$ be a finitely generated $R$-module such that $\Cid_R M<\infty$. Then, there exists an equality $$\nu(M)=\mu_{R}^d(\fm, C)\mu_{R}^{d}(\fm,C\otimes_R M).$$ In particular, if $\nu (M)=1$, then $C$ is the canonical module.
\begin{proof}By \cite[Theorem 2.11]{TW}, we have $\Cid_RM=\id_R(C\otimes_RM)$. Hence $R$ is Cohen-Macaulay by the Theorem of Bass, and so $C$ is maximal Cohen-Macaulay. Hence, $\mu_{R}^d(\fm, C)=0$ for all $i<d$. Also, $\mu_{R}^{d}(\fm,C\otimes_R M)=0$ for all $i>d$. On the other hand, by \cite[Theorem 2.11]{TW} and \cite[Corollary 2.9]{TW} we have $\Tor^{R}_{i}(C,M)=0$ for all $i\geq1$. Thus $C\utp_R M\simeq C\otimes_R M$, and the desired equality follows from Theorem \ref{Th2}. Finally, if $\nu(M)=1$, then one has $\mu_{R}^d(\fm, C)=1$ which shows that $C$ is the canonical $R$-module.
\end{proof}
\end{cor}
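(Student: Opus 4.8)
The plan is to apply Theorem~\ref{Th2} to the complex $X=M$ and then read off the coefficient at $t=0$, where $\beta_0^R(M)=\nu(M)$. Two preliminary reductions are needed before that theorem applies: its running hypothesis is $\id_R(C\utp_R M)<\infty$, and for the right-hand side to involve Bass numbers of a genuine module I must first identify $C\utp_R M$ with $C\otimes_R M$.

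First I would convert the $C$-injective dimension hypothesis into ordinary injective dimension: by \cite[Theorem 2.11]{TW} one has $\Cid_R M=\id_R(C\otimes_R M)$, so $\id_R(C\otimes_R M)<\infty$. The same results, together with \cite[Corollary 2.9]{TW}, give $\Tor_i^R(C,M)=0$ for all $i\ge 1$, whence $C\utp_R M\simeq C\otimes_R M$ in $\mathrm{D}(R)$ and $\id_R(C\utp_R M)=\id_R(C\otimes_R M)<\infty$. Thus the hypotheses of Theorem~\ref{Th2} are in force.

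The heart of the argument is a numerical alignment of the ranges in which the two families of Bass numbers are nonzero. Since $C\otimes_R M$ is a nonzero finitely generated module of finite injective dimension, Bass's theorem forces $R$ to be Cohen-Macaulay; then $\depth R=\dim R=d$ and $\id_R(C\otimes_R M)=d$, so $\mu_R^j(\fm,C\otimes_R M)=0$ for $j>d$. On the other hand, a semidualizing module over a Cohen-Macaulay local ring is maximal Cohen-Macaulay, so $\depth_R C=d$ and therefore $\mu_R^i(\fm,C)=0$ for $i<d$. Specializing Theorem~\ref{Th2} to $t=0$ (so that $j=-i$) gives
$$\nu(M)=\sum_{i}\mu_R^i(\fm,C)\,\mu_R^i(\fm,C\otimes_R M).$$
The two vanishing ranges $i\ge d$ and $i\le d$ meet only at $i=d$, so the sum collapses to $\mu_R^d(\fm,C)\,\mu_R^d(\fm,C\otimes_R M)$, which is the asserted equality. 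I expect this collapse to be the delicate point: it relies on knowing that $\depth_R C$ and $\id_R(C\otimes_R M)$ are \emph{exactly} $d$, not merely bounded by it, which is precisely what Bass's theorem and the maximal Cohen-Macaulayness of $C$ supply.

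For the final clause, if $\nu(M)=1$ then the product $\mu_R^d(\fm,C)\,\mu_R^d(\fm,C\otimes_R M)=1$ forces $\mu_R^d(\fm,C)=1$; as $\depth_R C=d$ this means $r_R(C)=1$, and a semidualizing module of type one over a Cohen-Macaulay local ring is the canonical module, as desired.
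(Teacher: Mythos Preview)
Your proposal is correct and follows essentially the same route as the paper: reduce $C\utp_R M$ to $C\otimes_R M$ via the Tor-vanishing from \cite{TW}, invoke Bass's theorem to get $R$ Cohen--Macaulay (hence $C$ maximal Cohen--Macaulay and $\id_R(C\otimes_R M)=d$), and then apply Theorem~\ref{Th2} at $t=0$ so that the sum collapses to the single term $i=d$. Your write-up is in fact a bit more explicit than the paper's about why the collapse occurs; the only cosmetic slip is calling $i\ge d$ and $i\le d$ the ``vanishing ranges'' when you mean the non-vanishing ranges.
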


\end{document}